\newcommand{\eqnum}{\refstepcounter{equation}\textup{\tagform@{\theequation}}}
\makeatletter \@addtoreset{equation}{section} \makeatother
\renewcommand{\theequation}{\thesection.\arabic{equation}}
\newcounter{theorem}
\makeatletter \@addtoreset{theorem}{section} \makeatother
\newtheorem{thm}[theorem]{Theorem}
\newtheorem*{thm*}{Theorem}
\newtheorem{lem}[theorem]{Lemma}
\newtheorem{cor}[theorem]{Corollary}
\newtheorem{prop}[theorem]{Proposition}
\newtheorem{thmX}{Theorem}
\theoremstyle{definition}
\newtheorem{defn}[theorem]{Definition}
\newtheorem{rem}[theorem]{Remark}
\newtheorem{exam}[theorem]{Example}
\newtheorem{constr}[theorem]{Construction}
\newtheorem*{exam*}{Example}
\newcommand{\changelocaltocdepth}[1]{%
  \addtocontents{toc}{\protect\setcounter{tocdepth}{#1}}%
  \setcounter{tocdepth}{#1}}
\newcommand{\nc}{\newcommand}
\nc{\renc}{\renewcommand}
\nc{\ssec}{\subsection}
\nc{\sssec}{\subsubsection}
\nc{\on}{\operatorname}
\nc{\term}[1]{#1\xspace}
\tikzset{
  commutative diagrams/.cd,
  arrow style=tikz,
  diagrams={>=latex}}
\tikzset{
  column sep/.code=\def\pgfmatrixcolumnsep{\pgf@matrix@xscale*(#1)},
  row sep/.code   =\def\pgfmatrixrowsep{\pgf@matrix@yscale*(#1)},
  matrix xscale/.code=%
    \pgfmathsetmacro\pgf@matrix@xscale{\pgf@matrix@xscale*(#1)},
  matrix yscale/.code=%
    \pgfmathsetmacro\pgf@matrix@yscale{\pgf@matrix@yscale*(#1)},
  matrix scale/.style={/tikz/matrix xscale={#1},/tikz/matrix yscale={#1}}}
\def\pgf@matrix@xscale{1}
\def\pgf@matrix@yscale{1}
\setlist[enumerate,1]{label={(\alph*)},itemsep=\parskip,leftmargin=0pt}
\newlist{thmlist}{enumerate}{1}
\setlist[thmlist,1]{label={\em(\roman*)},ref={\upshape{(\roman*)}},itemsep=\parskip,leftmargin=0pt}     
\newlist{defnlist}{enumerate}{1}
\setlist[defnlist,1]{label={(\roman*)},itemsep=\parskip,leftmargin=0pt}
\newlist{inlinelist}{enumerate*}{1}
\setlist[inlinelist,1]{label={(\alph*)}}
\nc{\sA}{\ensuremath{\mathcal{A}}\xspace}
\nc{\sB}{\ensuremath{\mathcal{B}}\xspace}
\nc{\sC}{\ensuremath{\mathcal{C}}\xspace}
\nc{\sD}{\ensuremath{\mathcal{D}}\xspace}
\nc{\sE}{\ensuremath{\mathcal{E}}\xspace}
\nc{\sF}{\ensuremath{\mathcal{F}}\xspace}
\nc{\sG}{\ensuremath{\mathcal{G}}\xspace}
\nc{\sH}{\ensuremath{\mathcal{H}}\xspace}
\nc{\sI}{\ensuremath{\mathcal{I}}\xspace}
\nc{\sJ}{\ensuremath{\mathcal{J}}\xspace}
\nc{\sK}{\ensuremath{\mathcal{K}}\xspace}
\nc{\sL}{\ensuremath{\mathcal{L}}\xspace}
\nc{\sM}{\ensuremath{\mathcal{M}}\xspace}
\nc{\sN}{\ensuremath{\mathcal{N}}\xspace}
\nc{\sO}{\ensuremath{\mathcal{O}}\xspace}
\nc{\sP}{\ensuremath{\mathcal{P}}\xspace}
\nc{\sQ}{\ensuremath{\mathcal{Q}}\xspace}
\nc{\sR}{\ensuremath{\mathcal{R}}\xspace}
\nc{\sS}{\ensuremath{\mathcal{S}}\xspace}
\nc{\sT}{\ensuremath{\mathcal{T}}\xspace}
\nc{\sU}{\ensuremath{\mathcal{U}}\xspace}
\nc{\sV}{\ensuremath{\mathcal{V}}\xspace}
\nc{\sW}{\ensuremath{\mathcal{W}}\xspace}
\nc{\sX}{\ensuremath{\mathcal{X}}\xspace}
\nc{\sY}{\ensuremath{\mathcal{Y}}\xspace}
\nc{\sZ}{\ensuremath{\mathcal{Z}}\xspace}
\nc{\bA}{\ensuremath{\mathbf{A}}\xspace}
\nc{\bB}{\ensuremath{\mathbf{B}}\xspace}
\nc{\bC}{\ensuremath{\mathbf{C}}\xspace}
\nc{\bD}{\ensuremath{\mathbf{D}}\xspace}
\nc{\bE}{\ensuremath{\mathbf{E}}\xspace}
\nc{\bF}{\ensuremath{\mathbf{F}}\xspace}
\nc{\bG}{\ensuremath{\mathbf{G}}\xspace}
\nc{\bH}{\ensuremath{\mathbf{H}}\xspace}
\nc{\bI}{\ensuremath{\mathbf{I}}\xspace}
\nc{\bJ}{\ensuremath{\mathbf{J}}\xspace}
\nc{\bK}{\ensuremath{\mathbf{K}}\xspace}
\nc{\bL}{\ensuremath{\mathbf{L}}\xspace}
\nc{\bM}{\ensuremath{\mathbf{M}}\xspace}
\nc{\bN}{\ensuremath{\mathbf{N}}\xspace}
\nc{\bO}{\ensuremath{\mathbf{O}}\xspace}
\nc{\bP}{\ensuremath{\mathbf{P}}\xspace}
\nc{\bQ}{\ensuremath{\mathbf{Q}}\xspace}
\nc{\bR}{\ensuremath{\mathbf{R}}\xspace}
\nc{\bS}{\ensuremath{\mathbf{S}}\xspace}
\nc{\bT}{\ensuremath{\mathbf{T}}\xspace}
\nc{\bU}{\ensuremath{\mathbf{U}}\xspace}
\nc{\bV}{\ensuremath{\mathbf{V}}\xspace}
\nc{\bW}{\ensuremath{\mathbf{W}}\xspace}
\nc{\bX}{\ensuremath{\mathbf{X}}\xspace}
\nc{\bY}{\ensuremath{\mathbf{Y}}\xspace}
\nc{\bZ}{\ensuremath{\mathbf{Z}}\xspace}
\nc{\bbA}{\ensuremath{\mathbb{A}}\xspace}
\nc{\bbB}{\ensuremath{\mathbb{B}}\xspace}
\nc{\bbC}{\ensuremath{\mathbb{C}}\xspace}
\nc{\bbD}{\ensuremath{\mathbb{D}}\xspace}
\nc{\bbE}{\ensuremath{\mathbb{E}}\xspace}
\nc{\bbF}{\ensuremath{\mathbb{F}}\xspace}
\nc{\bbG}{\ensuremath{\mathbb{G}}\xspace}
\nc{\bbH}{\ensuremath{\mathbb{H}}\xspace}
\nc{\bbI}{\ensuremath{\mathbb{I}}\xspace}
\nc{\bbJ}{\ensuremath{\mathbb{J}}\xspace}
\nc{\bbK}{\ensuremath{\mathbb{K}}\xspace}
\nc{\bbL}{\ensuremath{\mathbb{L}}\xspace}
\nc{\bbM}{\ensuremath{\mathbb{M}}\xspace}
\nc{\bbN}{\ensuremath{\mathbb{N}}\xspace}
\nc{\bbO}{\ensuremath{\mathbb{O}}\xspace}
\nc{\bbP}{\ensuremath{\mathbb{P}}\xspace}
\nc{\bbQ}{\ensuremath{\mathbb{Q}}\xspace}
\nc{\bbR}{\ensuremath{\mathbb{R}}\xspace}
\nc{\bbS}{\ensuremath{\mathbb{S}}\xspace}
\nc{\bbT}{\ensuremath{\mathbb{T}}\xspace}
\nc{\bbU}{\ensuremath{\mathbb{U}}\xspace}
\nc{\bbV}{\ensuremath{\mathbb{V}}\xspace}
\nc{\bbW}{\ensuremath{\mathbb{W}}\xspace}
\nc{\bbX}{\ensuremath{\mathbb{X}}\xspace}
\nc{\bbY}{\ensuremath{\mathbb{Y}}\xspace}
\nc{\bbZ}{\ensuremath{\mathbb{Z}}\xspace}
\DeclareMathSymbol{A}{\mathalpha}{operators}{`A}
\DeclareMathSymbol{B}{\mathalpha}{operators}{`B}
\DeclareMathSymbol{C}{\mathalpha}{operators}{`C}
\DeclareMathSymbol{D}{\mathalpha}{operators}{`D}
\DeclareMathSymbol{E}{\mathalpha}{operators}{`E}
\DeclareMathSymbol{F}{\mathalpha}{operators}{`F}
\DeclareMathSymbol{G}{\mathalpha}{operators}{`G}
\DeclareMathSymbol{H}{\mathalpha}{operators}{`H}
\DeclareMathSymbol{I}{\mathalpha}{operators}{`I}
\DeclareMathSymbol{J}{\mathalpha}{operators}{`J}
\DeclareMathSymbol{K}{\mathalpha}{operators}{`K}
\DeclareMathSymbol{L}{\mathalpha}{operators}{`L}
\DeclareMathSymbol{M}{\mathalpha}{operators}{`M}
\DeclareMathSymbol{N}{\mathalpha}{operators}{`N}
\DeclareMathSymbol{O}{\mathalpha}{operators}{`O}
\DeclareMathSymbol{P}{\mathalpha}{operators}{`P}
\DeclareMathSymbol{Q}{\mathalpha}{operators}{`Q}
\DeclareMathSymbol{R}{\mathalpha}{operators}{`R}
\DeclareMathSymbol{S}{\mathalpha}{operators}{`S}
\DeclareMathSymbol{T}{\mathalpha}{operators}{`T}
\DeclareMathSymbol{U}{\mathalpha}{operators}{`U}
\DeclareMathSymbol{V}{\mathalpha}{operators}{`V}
\DeclareMathSymbol{W}{\mathalpha}{operators}{`W}
\DeclareMathSymbol{X}{\mathalpha}{operators}{`X}
\DeclareMathSymbol{Y}{\mathalpha}{operators}{`Y}
\DeclareMathSymbol{Z}{\mathalpha}{operators}{`Z}
\nc{\mrm}[1]{\ensuremath{\mathrm{#1}}\xspace}
\nc{\mit}[1]{\ensuremath{\mathit{#1}}\xspace}
\nc{\mbf}[1]{\ensuremath{\mathbf{#1}}\xspace}
\nc{\mcal}[1]{\ensuremath{\mathcal{#1}}\xspace}
\nc{\msc}[1]{\ensuremath{\mathscr{#1}}\xspace}
\renc{\bar}[1]{\overline{#1}}
\DeclarePairedDelimiter\abs{\lvert}{\rvert}%
\nc{\sub}{\subset}
\nc{\too}{\longrightarrow}
\nc{\hook}{\hookrightarrow}
\nc*{\hooklongrightarrow}{\ensuremath{\lhook\joinrel\relbar\joinrel\rightarrow}}
\nc{\hooklong}{\hooklongrightarrow}
\nc{\twoheadlongrightarrow}{\relbar\joinrel\twoheadrightarrow}
\nc{\shiso}{\approx}
\nc{\isoto}{\xrightarrow{\sim}}
\nc{\isofrom}{\xleftarrow{\sim}}
\renc{\ge}{\geqslant}
\renc{\le}{\leqslant}
\nc{\id}{\mathrm{id}}
\DeclareMathOperator{\rk}{\mathrm{rk}}
\DeclareMathOperator{\Hom}{\on{Hom}}
\nc{\uHom}{\underline{\smash{\Hom}}}
\DeclareMathOperator{\Maps}{\on{Maps}}
\DeclareMathOperator{\End}{\on{End}}
\DeclareMathOperator{\Sym}{\on{Sym}}
\nc{\uEnd}{\underline{\smash{\End}}}
\nc{\colim}{\varinjlim}
\renc{\lim}{\varprojlim}
\nc{\Cofib}{\on{Cofib}}
\nc{\Fib}{\on{Fib}}
\nc{\initial}{\varnothing}
\nc{\op}{\mathrm{op}}
\DeclareMathOperator*{\fibprod}{\times}
\renc{\setminus}{\smallsetminus}
\newcommand{\thmref}[1]{Theorem~\ref{#1}}
\newcommand{\secref}[1]{Sect.~\ref{#1}}
\newcommand{\ssecref}[1]{Subsect. ~\ref{#1}}
\newcommand{\sssecref}[1]{\ref{#1}}
\newcommand{\lemref}[1]{Lemma~\ref{#1}}
\newcommand{\propref}[1]{Proposition~\ref{#1}}
\newcommand{\corref}[1]{Corollary~\ref{#1}}
\newcommand{\remref}[1]{Remark~\ref{#1}}
\newcommand{\defnref}[1]{Definition~\ref{#1}}
\renewcommand{\eqref}[1]{(\ref{#1})}
\newcommand{\examref}[1]{Example~\ref{#1}}
\newcommand{\itemref}[1]{\ref{#1}}
\nc{\Spc}{\mrm{Spc}}
\nc{\SCRing}{\mrm{SCRing}}
\nc{\Mod}{\mrm{Mod}}
\nc{\perf}{{\mrm{perf}}}
\nc{\Spt}{\mrm{Spt}}
\nc{\A}{\bA}
\nc{\red}{{\mrm{red}}}
\nc{\Spec}{\on{Spec}}
\nc{\cn}{{\on{cn}}}
\nc{\Sm}{\mrm{Sm}}
\nc{\Sch}{\mrm{Sch}}
\nc{\aff}{{\mrm{aff}}}
\nc{\DSch}{\mrm{DSch}}
\nc{\Qcoh}{\on{Qcoh}}
\nc{\bDelta}{\mathbf{\Delta}}
\nc{\Cech}{\textnormal{\v{C}}}
\nc{\Perf}{\on{Perf}}
\nc{\cl}{{\mrm{cl}}}
\nc{\Tot}{\on{Tot}}
\nc{\idem}{{\mrm{idem}}}
\nc{\K}{\on{K}}
\renc{\P}{\bP}
\nc{\modmod}{/\!\!/}
\nc{\Bl}{\on{Bl}}
\nc{\KH}{\on{KH}}
\nc{\cdh}{\mrm{cdh}}
\nc{\DAlg}{\mrm{DAlg}}
\nc{\Alg}{\mrm{Alg}}
\nc{\scr}{\term{simplicial commutative ring}}
\nc{\scrs}{\term{simplicial commutative rings}}
\nc{\inftyCat}{\term{$\infty$-category}}
\nc{\inftyCats}{\term{$\infty$-categories}}
\nc{\inftyGrpd}{\term{$\infty$-groupoid}}
\nc{\inftyGrpds}{\term{$\infty$-groupoids}}
\nc{\das}{\term{derived algebraic space}}
\nc{\dass}{\term{derived algebraic spaces}}
\nc{\as}{\term{algebraic space}}
\nc{\ass}{\term{algebraic spaces}}
\title{Algebraic K-theory of quasi-smooth blow-ups and cdh descent}
\author{Adeel A. Khan}
\date{2020-02-01}
\def\l@subsection{\@tocline{2}{0pt}{4pc}{6pc}{}}
\begin{document}

\begin{abstract}
We construct a semi-orthogonal decomposition on the category of perfect complexes on the blow-up of a derived Artin stack in a quasi-smooth centre.
This gives a generalization of Thomason's blow-up formula in algebraic K-theory to derived stacks.
We also provide a new criterion for descent in Voevodsky's cdh topology, which we use to give a direct proof of Cisinski's theorem that Weibel's homotopy invariant K-theory satisfies cdh descent.
\end{abstract}

\thanks{Author partially supported by SFB 1085 Higher Invariants, Universität Regensburg}

\maketitle

\renewcommand\contentsname{\vspace{-1cm}}
\tableofcontents

\parskip 0.2cm
\thispagestyle{empty}

\changelocaltocdepth{1}

\section{Introduction}
\label{sec:intro}

\ssec{}

Let $X$ be a scheme and $i : Z \to X$ a regular closed immersion.
This means that $Z$ is, Zariski-locally on $X$, the zero-locus of some regular sequence of functions $f_1,\ldots,f_n \in \Gamma(X,\sO_X)$.
Then the blow-up $\Bl_{Z/X}$ fits into a square
  \begin{equation}\label{eq:blow-up square}
    \begin{tikzcd}
      \P(\sN_{Z/X}) \ar{r}{i_D}\ar{d}{q}
        & \Bl_{Z/X} \ar{d}{p}
      \\
      Z \ar{r}{i}
        & X,
    \end{tikzcd}
  \end{equation}
where the exceptional divisor is the projective bundle associated to the conormal sheaf $\sN_{Z/X}$, which under the assumptions is locally free of rank $n$.
A result of Thomason \cite{ThomasonBlowup} asserts that after taking algebraic K-theory, the induced square of spectra
  \begin{equation*}
     \begin{tikzcd}
      \K(X) \ar{r}{i^*}\ar{d}{p^*}
        & \K(Z) \ar{d}
      \\
      \K(\Bl_{Z/X}) \ar{r}
        & \K(\P(\sN_{Z/X}))
    \end{tikzcd}
  \end{equation*}
is homotopy cartesian.
Here $\K(X)$ denotes the Bass--Thomason--Trobaugh algebraic K-theory spectrum of perfect complexes on a scheme $X$.
We may summarize this property by saying that algebraic K-theory satisfies \emph{descent} with respect to blow-ups in regularly immersed centres.

Now suppose that $i$ is more generally a \emph{quasi-smooth} closed immersion of derived schemes.
This means that $Z$ is, Zariski-locally on $X$, the \emph{derived} zero-locus of some arbitrary sequence of functions $f_1,\ldots,f_n \in \Gamma(X,\sO_X)$.
(When $X$ is a classical scheme and the sequence is regular, this is the same as the classical zero-locus, and we are in the situation discussed above.)
In the derived setting there is still a conormal sheaf $\sN_{Z/X}$ on $Z$, locally free of rank $n$, and one may still form the blow-up square \eqref{eq:blow-up square}, see \cite{KhanBlowup}.
Our goal in this paper is to generalize Thomason's result above to this situation.
At the same time we also allow $X$ to be a derived \emph{Artin stack}, and consider any \emph{additive invariant} of stable \inftyCats (see \defnref{defn:additive invariant}).
Examples of additive invariants include algebraic K-theory $\K$, connective algebraic K-theory $\K^\mrm{cn}$, topological Hochschild homology $\mrm{THH}$, and topological cyclic homology $\mrm{TC}$.

\begin{thmX}\label{thm:blow-up descent}
Let $E$ be an additive invariant of stable \inftyCats.
Then $E$ satisfies descent by quasi-smooth blow-ups.
That is, given a derived Artin stack $X$ and a quasi-smooth closed immersion $i : Z \to X$ of virtual codimension $n\ge 1$, form the blow-up square \eqref{eq:blow-up square}.
Then the induced commutative square
  \begin{equation*}
    \begin{tikzcd}
      E(X) \ar{r}{i^*}\ar{d}{p^*}
        & E(Z) \ar{d}
      \\
      E(\Bl_{Z/X}) \ar{r}
        & E(\P(\sN_{Z/X}))
    \end{tikzcd}
  \end{equation*}
is homotopy cartesian.
\end{thmX}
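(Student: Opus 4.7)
The plan is to deduce Theorem~A from two semi-orthogonal decompositions of perfect complexes — one on $\Bl_{Z/X}$ and one (Beilinson-type) on $\P(\sN_{Z/X})$ — together with the general fact, essentially built into the definition of additivity, that an additive invariant converts a semi-orthogonal decomposition $\sC = \langle \sC_0,\ldots,\sC_{m-1}\rangle$ of stable $\infty$-categories into a direct sum splitting $E(\sC) \simeq \bigoplus_i E(\sC_i)$. Granting the two decompositions, the theorem reduces to a formal verification that an induced square of direct sums of $E$-spectra is homotopy cartesian.

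The key technical input is the semi-orthogonal decomposition
\[
  \Perf(\Bl_{Z/X}) \simeq \bigl\langle\, p^*\Perf(X),\; i_{D*}\bigl(q^*\Perf(Z)\otimes\sO(-1)\bigr),\; \ldots,\; i_{D*}\bigl(q^*\Perf(Z)\otimes\sO(-(n-1))\bigr) \,\bigr\rangle,
\]
which is the derived/stacky generalization of the Orlov--Thomason blow-up formula. Fully faithfulness of each factor and semi-orthogonality between pairs reduce, via the projection and base-change formulas, to the computations $p_*p^* \simeq \id$ and the vanishing of $q_*\sO(k)$ for $-(n-1)\le k\le -1$ on the projective bundle $q : \P(\sN_{Z/X}) \to Z$; these are available in the derived setting. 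Generation — that the listed subcategories jointly generate $\Perf(\Bl_{Z/X})$ — is the harder part and would be carried out by a Beilinson/Koszul-type resolution adapted to the exceptional Cartier divisor of a quasi-smooth blow-up. The analogous Beilinson decomposition
\[
  \Perf(\P(\sN_{Z/X})) \simeq \bigl\langle\, q^*\Perf(Z),\; q^*\Perf(Z)\otimes\sO(1),\; \ldots,\; q^*\Perf(Z)\otimes\sO(n-1) \,\bigr\rangle
\]
of the quasi-coherent projective bundle goes in parallel, or may be extracted from the literature.

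Granting both decompositions, applying $E$ yields $E(\Bl_{Z/X}) \simeq E(X) \oplus E(Z)^{\oplus(n-1)}$ and $E(\P(\sN_{Z/X})) \simeq E(Z)^{\oplus n}$. Expressing the maps of the square \eqref{eq:blow-up square} in these coordinates via standard projection, base-change, and Koszul identities — in particular $i_D^* p^* \simeq q^* i^*$ on the $E(X)$-summand, and the cofiber sequence $i_D^* i_{D*}\sF \to \sF \to \sF\otimes\sN_{D/\Bl}[2]$ (which splits after applying $E$) on the remaining summands — reduces cartesianness to an elementary linear-algebra check: the cofiber of each horizontal map is identified with the cofiber of $i^* : E(X) \to E(Z)$.

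The main obstacle is establishing the semi-orthogonal decomposition for $\Bl_{Z/X}$ in the generality of a derived Artin stack $X$, and especially the generation step. I would address this by reducing, via smooth descent on $X$ combined with flat base change for $\Perf$, to the universal local model in which $X = V$ is the total space of a rank-$n$ vector bundle on $Z$ and $i : Z \hook V$ is the zero section. By the derived blow-up formalism of \cite{KhanBlowup}, in this local situation $\Bl_{Z/V}$ is identified with the total space of the tautological line bundle on $\P(V)$, where classical Beilinson-type arguments on the projective bundle yield the needed generation statement. Propagating back along the base change then gives the decomposition in general.
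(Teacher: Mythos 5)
Your high-level plan is exactly the paper's: establish semi-orthogonal decompositions on $\Perf(\Bl_{Z/X})$ and $\Perf(\P(\sN_{Z/X}))$ (Theorems~B and~C, proved first on $\Qcoh$), apply $E$ to split them (Corollaries~\ref{cor:E(P(E))} and \ref{cor:E(Bl_{Z/X})} via Lemma~\ref{lem:additive}), and read off cartesianness of the square from the compatibility of the two direct-sum decompositions. The inputs you list for fully faithfulness and semi-orthogonality (projection formula, derived Serre computation, $i_D^*i_{D*}\simeq \id\oplus (-\otimes\sN_{D/\Bl})[1]$, Grothendieck duality for the divisor) are precisely the ones the paper uses in \propref{prop:Serre}, \lemref{lem:i^*i_*(O_D)}, and \propref{prop:omega_D/X}, and your final ``linear algebra'' reduction is also what the paper leaves implicit in \sssecref{sssec:proof of blow-up descent}.

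The one place your sketch goes wrong is the generation step, which you rightly identify as the hard part. You propose reducing to a ``universal local model in which $X=V$ is the total space of a rank-$n$ vector bundle on $Z$'', and then ``propagating back along the base change.'' Neither half survives scrutiny. First, after smooth/fpqc reduction the local model is $X$ affine with $Z$ the derived zero-locus of $n$ functions, i.e.\ $i$ is pulled back from $\{0\}\hook\A^n$ along some $f:X\to\A^n$; in general $X$ does \emph{not} retract onto $Z$, so it is not the total space of a bundle over $Z$, and there is no reduction to the zero-section case. Second, and more importantly, generation statements do not propagate along derived base change: knowing that twists of $\sO$ generate $\Qcoh(\Bl_{\{0\}/\A^n})$ does not a priori tell you that they generate $\Qcoh(\Bl_{Z/X})=\Qcoh(X\times_{\A^n}\Bl_{\{0\}/\A^n})$, since not every object downstairs is the base change of one upstairs. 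The device the paper actually uses (proof of \thmref{thm:Qcoh(Bl)}\itemref{item:Qcoh(Bl)/generates}) is the quasi-smooth closed immersion $i':\widetilde X\hook\P^{n-1}_X$ from \cite[3.3.6]{KhanBlowup}: one pulls back the Beilinson relation $\colim_{J\subsetneq[n-1]}\sO(|J|)\simeq\sO(n)$ along $(i')^*$ to see that all $\sO_{\widetilde X}(k)$ lie in the candidate subcategory $\bD$, then resolves $i'_*(\sG_{n-1})$ by $\sO(d_\alpha)$'s on $\P^{n-1}_X$ and transfers this by adjunction to a surjection $\bigoplus_\alpha\sO(d_\alpha)[n_\alpha]\to\sG_{n-1}$ whose source lies in $\bD$ while the target is right orthogonal to $\bD$, forcing $\sG_{n-1}=0$. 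That closed-immersion/adjunction argument is the missing idea in your sketch; the ``total space of the tautological line bundle'' picture you invoke is specific to the universal case over a point and does not transport.
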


We deduce \thmref{thm:blow-up descent} from an analysis of the categories of perfect complexes on $\Bl_{Z/X}$ and on the exceptional divisor $\P(\sN_{Z/X})$.
The relevant notion is that of a \emph{semi-orthogonal decomposition}, see \defnref{defn:SOD}.

\begin{thmX}\label{thm:Perf(P(E))}
Let $X$ be a derived Artin stack.
For any locally free $\sO_X$-module $\sE$ of rank $n+1$, $n\ge 0$, consider the projective bundle $q : \P(\sE) \to X$.
Then we have:
\begin{thmlist}
\item\label{item:Perf(P(E))/fully faithful}
For each $0\le k\le n$, the assignment $\sF \mapsto q^*(\sF) \otimes \sO(-k)$ defines a fully faithful functor $\Perf(X) \to \Perf(\P(\sE))$, whose essential image we denote $\bA(-k)$.

\item\label{item:Perf(P(E))/SOD}
The sequence of full subcategories $(\bA(0),\ldots,\bA(-n))$ forms a semi-orthogonal decomposition of $\Perf(\P(\sE))$.
\end{thmlist}
\end{thmX}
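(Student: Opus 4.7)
\medskip

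\textbf{Proof sketch.}
The whole argument rests on the derived projective bundle formula for pushforward: $q_{*}\sO(m) \simeq \Sym^{m}(\sE^{\vee})$ for $m \geq 0$ and $q_{*}\sO(m) \simeq 0$ for $-n \leq m \leq -1$. Since $q$ is smooth and proper, its pushforward commutes with arbitrary derived base change, so it suffices to verify the formula after trivializing $\sE$ smooth-locally on $X$, at which point we reduce to the classical Serre computation for $\P^{n}$ pulled back from $\Spec\bbZ$.

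Granting this, part (i) is immediate: since tensoring by the invertible sheaf $\sO(-k)$ is an autoequivalence of $\Perf(\P(\sE))$, fully faithfulness reduces to the case $k = 0$, where adjunction and the projection formula give
\[
\Maps(q^{*}\sF, q^{*}\sG) \simeq \Maps_{X}(\sF, \sG \otimes q_{*}\sO) \simeq \Maps_{X}(\sF, \sG).
\]
The semi-orthogonality in part (ii) is the same computation: for $0 \leq k < l \leq n$,
\[
\Maps(q^{*}\sF \otimes \sO(-k), q^{*}\sG \otimes \sO(-l)) \simeq \Maps_{X}(\sF, \sG \otimes q_{*}\sO(k-l)) \simeq 0,
\]
since $k - l \in \{-n, \ldots, -1\}$.

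The substantive step is to show that $\bA(0), \ldots, \bA(-n)$ together generate $\Perf(\P(\sE))$. The plan is to adapt Beilinson's resolution of the diagonal to the derived setting. On $\P(\sE) \times_{X} \P(\sE)$, the composition
\[
p_{2}^{*}\sO(-1) \hook q^{*}\sE \twoheadrightarrow p_{1}^{*}\sQ
\]
(where $\sQ$ is the tautological quotient in the Euler sequence on $\P(\sE)$) defines a section $\sigma$ of the rank-$n$ locally free sheaf $p_{2}^{*}\sO(1) \otimes p_{1}^{*}\sQ$ whose derived vanishing locus is exactly the diagonal. Since $\P(\sE) \to X$ is smooth of relative dimension $n$, the diagonal is a regularly immersed closed substack of the expected codimension, so the Koszul complex of $\sigma$ yields a finite resolution of $\sO_{\Delta}$ with $p$-th term $p_{2}^{*}\sO(-p) \otimes p_{1}^{*}\Omega^{p}_{\P(\sE)/X}(p)$. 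For any $\sF \in \Perf(\P(\sE))$, the Fourier--Mukai identity $\sF \simeq p_{2*}(p_{1}^{*}\sF \otimes \sO_{\Delta})$, combined with this resolution, the projection formula along $p_{2}$, and the flat base-change isomorphism $p_{2*}p_{1}^{*} \simeq q^{*}q_{*}$, exhibits $\sF$ as a finite iterated extension of objects of the form $q^{*}\bigl(q_{*}(\sF \otimes \Omega^{p}(p))\bigr) \otimes \sO(-p) \in \bA(-p)$, as required.

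The main obstacle is purely foundational: checking that the Euler sequence, the Koszul resolution of a regularly immersed diagonal, the projection formula, and flat base change all retain their classical form on a derived Artin stack, including the preservation of perfectness by $q_{*}$. Once these derived-geometric inputs are in place, the argument proceeds formally as in Orlov's classical projective bundle theorem.
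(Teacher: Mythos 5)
Your proof is correct in substance but takes a genuinely different route from the paper. For the generation statement (the ``substantive step''), the paper does \emph{not} resolve the diagonal. Instead, it first proves the semi-orthogonal decomposition at the level of $\Qcoh(\P(\sE))$ (\thmref{thm:Qcoh(P(E))}), where one may freely use colimit arguments, and then restricts to $\Perf$. Generation is established by a direct recursive construction: given $\sF$, one sets $\sG_{-1}=\sF\otimes\sO(-1)$ and builds exact triangles $q^*q_*(\sG_{m-1}\otimes\sO(1))\to\sG_{m-1}\otimes\sO(1)\to\sG_m$ via the counit of $(q^*,q_*)$. An induction shows $\sG_m$ is right orthogonal to $\bC(0),\dots,\bC(m)$; reducing fpqc-locally to $X$ affine and $\sE$ free, one kills $\sG_n$ using the colimit presentation $\colim_{J\subsetneq[n]}\sO(m+\lvert J\rvert)\isoto\sO(m+n+1)$ (\lemref{lem:O(n+1)}, from \cite{SAG-20180204}) together with a surjection from twists of $\sO$ (\lemref{lem:surjection from O(m)'s}). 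One then passes to $\Perf$ by noting $q^*$ and $q_*$ preserve perfect complexes.

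Your Beilinson--Orlov route---resolving $\sO_\Delta$ on $\P(\sE)\times_X\P(\sE)$ by the Koszul complex of the tautological section and running the Fourier--Mukai argument---is the classical proof and also works in this setting: the diagonal is indeed the \emph{derived} zero locus of the section you describe because it is a quasi-smooth closed immersion of the expected codimension $n$, and $q_*$ preserves perfectness since $q$ is smooth and proper. What the paper's approach buys is that it avoids $\P(\sE)\times_X\P(\sE)$ and the Koszul machinery entirely, and the identical recursive pattern is then reused almost verbatim to prove \thmref{thm:Qcoh(Bl)} for the blow-up, where a resolution-of-the-diagonal argument would be considerably more awkward. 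Your approach has the advantage of producing the explicit Beilinson-type decomposition of $\sF$ and of being closer to Orlov's original proof, at the cost of needing the foundational inputs you flag (relative Euler sequence, Koszul resolution of a quasi-smooth diagonal, and flat base change $p_{2*}p_1^*\simeq q^*q_*$) to be verified for derived Artin stacks. One small notational caveat: with the paper's (Grothendieck) convention $q^*\sE\twoheadrightarrow\sO(1)$, the bundle $\sQ$ in your section should be the relative tangent-type sheaf $(\Omega^1_{\P(\sE)/X}(1))^\vee$ rather than the ``tautological quotient'' of the Euler sequence; the Koszul terms $p_2^*\sO(-p)\otimes p_1^*\Omega^p(p)$ you write down are correct once this is sorted out.
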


\begin{thmX}\label{thm:Perf(Bl)}
Let $X$ be a derived Artin stack.
For any quasi-smooth closed immersion $i : Z \to X$ of virtual codimension $n\ge 1$, form the blow-up square \eqref{eq:blow-up square}.
Then we have:
\begin{thmlist}
\item\label{item:Perf(Bl)/fully faithful 1}
The assignment $\sF \mapsto p^*(\sF)$ defines a fully faithful functor $\Perf(X) \to \Perf(\Bl_{Z/X})$, whose essential image we denote $\bB(0)$.

\item\label{item:Perf(Bl)/fully faithful 2}
For each $1\le k\le n-1$, the assignment $\sF \mapsto (i_D)_*(q^*(\sF) \otimes \sO(-k))$ defines a fully faithful functor $\Perf(Z) \to \Perf(\Bl_{Z/X})$, whose essential image we denote $\bB(-k)$.

\item\label{item:Perf(Bl)/SOD}
The sequence of full subcategories $(\bB(0),\ldots,\bB(-n+1))$ forms a \emph{semi-orthogonal decomposition} of $\Perf(\Bl_{Z/X})$.
\end{thmlist}
\end{thmX}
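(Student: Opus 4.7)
The plan is to follow the classical Orlov/Bondal--Orlov strategy for blow-ups, adapted to the derived quasi-smooth setting by leveraging \thmref{thm:Perf(P(E))}. Full faithfulness, semi-orthogonality, and generation are verified in turn.

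For \itemref{item:Perf(Bl)/fully faithful 1}, adjunction and the projection formula reduce full faithfulness of $p^*$ to the identity $p_*\sO_{\Bl_{Z/X}} \simeq \sO_X$, which belongs to the derived blow-up formalism of \cite{KhanBlowup} (and reduces by base change to $q_*\sO_{\P(\sN)} \simeq \sO_Z$, the $\sF = \sO_Z$ case of \thmref{thm:Perf(P(E))}). For \itemref{item:Perf(Bl)/fully faithful 2} and for semi-orthogonality, I would combine adjunction with the derived self-intersection formula
\[
 (i_D)^*(i_D)_*\sM \;\simeq\; \sM \,\oplus\, \sM\otimes\sO_{\P(\sN)}(1)[1],
\]
valid because $i_D$ is a Cartier divisor in $\Bl_{Z/X}$ with conormal bundle $\sO_{\P(\sN)}(1)$. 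Taking $\sM = q^*\sF'\otimes\sO(-\ell)$ and unwinding adjunctions gives, for $\sF,\sF' \in \Perf(Z)$ and $1 \le k, \ell \le n-1$,
\[
 \Hom(\bB(-k),\bB(-\ell)) \simeq \Hom(\sF,\sF')\otimes\bigl(q_*\sO(k-\ell) \,\oplus\, q_*\sO(k-\ell-1)[-1]\bigr),
\]
while a simpler calculation yields $\Hom(\bB(0),\bB(-\ell)) \simeq \Hom(i^*\sF,\sF')\otimes q_*\sO(-\ell)$ for $\ell \ge 1$. The case $k=\ell$ collapses to $\Hom(\sF,\sF')$ (full faithfulness), while for $0\le k<\ell\le n-1$ all relevant indices $k-\ell$, $k-\ell-1$, and $-\ell$ lie in $[-(n-1),-1]$, where $q_*\sO(\cdot)$ vanishes by the projective bundle formula applied to $\sN$ of rank $n$.

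Generation is the main obstacle. I would show that the right orthogonal $\bT^\perp$ of the thick subcategory $\bT := \langle\bB(0),\ldots,\bB(-n+1)\rangle$ in $\Perf(\Bl_{Z/X})$ vanishes. Given $\sE\in\bT^\perp$, orthogonality with $\bB(0)$ forces $p_*\sE \simeq 0$, and since $p$ restricts to an isomorphism over $U := \Bl_{Z/X}\setminus\P(\sN) \simeq X\setminus Z$, this gives $\sE|_U \simeq 0$. By derived excision for the Cartier divisor $\P(\sN) \subset \Bl_{Z/X}$, one writes $\sE \simeq (i_D)_*\sN$ for some $\sN\in\Perf(\P(\sN))$. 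Unpacking the remaining orthogonality via the self-intersection formula (combined with $(i_D)^! \simeq (i_D)^*(-)\otimes\sO_{\P(\sN)}(-1)[-1]$) yields $q_*(\sN\otimes\sO(j)) \simeq 0$ for $j = 0,1,\ldots,n-1$. Expressing $\sN$ via its \thmref{thm:Perf(P(E))}-decomposition as an iterated extension of objects $q^*\sG_k\otimes\sO(-k)$ ($k=0,\ldots,n-1$, with $\sG_k\in\Perf(Z)$), these vanishings inductively force $\sG_0,\sG_1,\ldots,\sG_{n-1}$ to be zero --- the $j$-th vanishing peels off $\sG_j$ once the earlier components are killed, using $q_*\sO(m)=0$ for $m\in[-(n-1),-1]$ and $q_*\sO(0)\simeq\sO_Z$. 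Hence $\sN\simeq 0$ and $\sE\simeq 0$. The only genuinely new input is the derived-excision step, which I would either extract from a general open-closed recollement for derived Artin stacks or reduce by smooth descent to the affine/scheme setting where it is classical.
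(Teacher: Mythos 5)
Your handling of full faithfulness and semi-orthogonality is essentially the paper's own route (the self-intersection formula $(i_D)^*(i_D)_*\sM \simeq \sM\oplus\sM\otimes\sO(1)[1]$ together with Serre vanishing for $q$), but the generation step contains a genuine error. The ``derived excision'' claim --- that a perfect complex $\sE$ on $\Bl_{Z/X}$ which is acyclic on the complement of the exceptional divisor must be of the form $(i_D)_*\sN$ with $\sN\in\Perf(D)$ --- is false, already in the classical affine setting where you propose to check it. Take $X=\A^1=\Spec k[x]$, $D=\{0\}$, and $\sE=\on{cofib}(x^2\colon\sO_X\to\sO_X)\simeq k[x]/(x^2)$: this is perfect and vanishes on $X\setminus D$, yet every object in the essential image of $i_*$ has homotopy sheaves annihilated by $x$, so $\sE\not\simeq i_*\sN$. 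There is no d\'evissage for perfect complexes; what is true (Thomason--Trobaugh) is only that such $\sE$ lies in the \emph{thick subcategory generated by} $(i_D)_*\Perf(D)$, and with that weaker statement your subsequent computation --- applying $q_*((-)\otimes\sO(j))$ to a single $\sN$ and decomposing it via \thmref{thm:Perf(P(E))} --- no longer applies as written.

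Two further points make the ``show $\bT^\perp=0$ inside $\Perf$'' strategy problematic in the stated generality. First, deducing $p_*\sE\simeq 0$ (and later $q_*(\sN\otimes\sO(j))\simeq 0$) from orthogonality against $\Perf(X)$, resp.\ $\Perf(Z)$, needs perfect complexes to generate $\Qcoh$; this holds for qcqs derived schemes and algebraic spaces but fails for general derived Artin stacks, and the proposed reduction ``by smooth descent to the affine case'' is not available for your formulation, since membership in $\bT$ and orthogonality to it are not smooth-local conditions. Second, even granting $\bT^\perp=0$, concluding $\bT=\Perf(\Bl_{Z/X})$ requires the inclusion of $\bT$ to be right admissible (this is true here because each $\bB(-k)$ has a right adjoint preserving $\Perf$, but it has to be argued). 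The paper sidesteps all of this by first establishing the semi-orthogonal decomposition on $\Qcoh(\Bl_{Z/X})$ (\thmref{thm:Qcoh(Bl)}), where generation is proved by an explicit, base-change-compatible recursion of counit cofibres $\sG_m$ whose vanishing is checked fpqc-locally using the factorization of $\Bl_{Z/X}$ through $\P^{n-1}_X$ and Lemmas \ref{lem:O(n+1)} and \ref{lem:surjection from O(m)'s}, and then observing that this recursion preserves perfect complexes, so the decomposition restricts to $\Perf$. (A smaller slip: $p_*\sO_{\Bl_{Z/X}}\simeq\sO_X$ does not reduce by base change to $q_*\sO_{\P(\sN_{Z/X})}\simeq\sO_Z$, since the blow-up square is not derived cartesian; the paper instead base-changes to the universal case $\{0\}\hook\A^n$.)
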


We immediately deduce the projective bundle and blow-up formulas
  \begin{equation*}
    E(\P(\sE)) \simeq \bigoplus_{m=0}^{n} E(X),
    \qquad
    E(\Bl_{Z/X}) \simeq E(X) \oplus \bigoplus_{k=1}^{n-1} E(Z),
  \end{equation*}
for any additive invariant $E$, see Corollaries~\ref{cor:E(P(E))} and \ref{cor:E(Bl_{Z/X})}, from which \thmref{thm:blow-up descent} immediately follows (see \ssecref{ssec:blowup/additive}).

\ssec{}

The results mentioned above admit the following interesting special cases:

\begin{enumerate}
  \item
Suppose that $X$ is a smooth projective variety over the field of complex numbers.
This case of \thmref{thm:Perf(P(E))} was proven by Orlov in \cite{OrlovSOD}.
He also proved \thmref{thm:Perf(Bl)} for any \emph{smooth} subvariety $Z \hook X$.
  \item
More generally suppose that $X$ is a quasi-compact quasi-separated classical scheme.
Then the projective bundle formula (\corref{cor:E(P(E))}) for algebraic K-theory was proven by Thomason \cite{ThomasonTrobaugh,ThomasonProjectiveBundle}.
Similarly suppose that $i : Z \to X$ is a quasi-smooth closed immersion of quasi-compact quasi-separated classical schemes.
Then it is automatically a regular closed immersion, and in this case Thomason also proved \corref{cor:E(Bl_{Z/X})} for algebraic K-theory \cite{ThomasonBlowup}.
In fact, the papers \cite{ThomasonProjectiveBundle} and \cite{ThomasonBlowup} essentially contain under these assumptions proofs of Theorems~\ref{thm:Perf(P(E))} and \ref{thm:Perf(Bl)}, respectively, even if the term ``semi-orthogonal decomposition'' is not used explicitly.
For $\mrm{THH}$ and $\mrm{TC}$, these cases of Corollaries~\ref{cor:E(P(E))} and \ref{cor:E(Bl_{Z/X})} were proven by Blumberg and Mandell \cite{BlumbergMandellTHH}.

  \item 
More generally still, let $X$ and $Z$ be classical Artin stacks.
These cases of Theorems~\ref{thm:Perf(P(E))} and \ref{thm:Perf(Bl)} are proven by by Bergh and Schnürer in \cite{BerghSchnuerer}.
However we note that Corollaries~\ref{cor:E(P(E))} and \ref{cor:E(Bl_{Z/X})} were obtained earlier by Krishna and Ravi in \cite{KrishnaRavi}, and their arguments in fact prove Theorems~\ref{thm:Perf(P(E))} and \ref{thm:Perf(Bl)} for classical Artin stacks.

  \item
Let $X$ be a noetherian affine classical scheme, and let $Z$ be the derived zero-locus of some functions $f_1,\ldots,f_n \in \Gamma(X, \sO_X)$.
Then the canonical morphism $i : Z \to X$ is a quasi-smooth closed immersion.
In this case, \thmref{thm:blow-up descent} for algebraic K-theory was proven by Kerz--Strunk--Tamme \cite{KerzStrunkTamme} (where the blow-up $\Bl_{Z/X}$ was explicitly modelled as the derived fibred product $X \fibprod_{\A^n} \Bl_{\{0\}/\A^n}$), as part of their proof of Weibel's conjecture on negative K-theory.
\end{enumerate}

\ssec{}

Let $\KH$ denote homotopy invariant K-theory.
Recall that this is the $\A^1$-localization of the presheaf $X \mapsto \K(X)$.
That is, it is obtained by forcing the property of $\A^1$-homotopy invariance: for every quasi-compact quasi-separated algebraic space $X$, the map
  \begin{equation*}
    \KH(X) \to \KH(X \times \A^1)
  \end{equation*}
is invertible (see \cite{WeibelKH,CisinskiKH}).
As an application of \thmref{thm:blow-up descent}, we give a new proof of the following theorem of Cisinski \cite{CisinskiKH}:

\begin{thmX}\label{thm:KH cdh descent}
The presheaf of spectra $S \mapsto \KH(S)$ satisfies cdh descent on the site of quasi-compact quasi-separated algebraic spaces.
\end{thmX}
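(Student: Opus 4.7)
The plan is to apply the new cdh descent criterion developed earlier in the paper. That criterion should assert, roughly, that a presheaf of spectra on quasi-compact quasi-separated algebraic spaces satisfies cdh descent provided it is (a) a Nisnevich sheaf, (b) $\A^1$-homotopy invariant, and (c) descends for quasi-smooth blow-up squares of derived algebraic spaces (as in \thmref{thm:blow-up descent}). Thus the task reduces to verifying these three conditions for $\KH$.

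Condition (b) holds by construction, since $\KH$ is defined as the $\A^1$-localization of $\K$. For (a), $\K$ itself satisfies Nisnevich descent on qcqs algebraic spaces (Thomason--Trobaugh, extended from schemes to algebraic spaces by the standard descent argument), and Nisnevich descent is preserved under $\A^1$-localization because the latter is a left Bousfield localization on the Nisnevich site and hence commutes with the relevant homotopy limits. For (c), \thmref{thm:blow-up descent} gives quasi-smooth blow-up descent for every additive invariant of stable \inftyCats; in particular $\K$ satisfies (c) sitewise, and this property is inherited by $\KH$ after $\A^1$-localization by the same Bousfield argument.

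The main obstacle is the cdh criterion itself, which is taken here as a black box but is the genuine content of this application. The classical Haesemeyer--Cisinski cdh criterion asks for descent along \emph{classical} abstract blow-up squares, a condition that is delicate for $\K$-theory since the center of such a blow-up need not be regularly immersed---this is precisely why the original proof of Cisinski's theorem is indirect. The new criterion replaces the classical abstract blow-up condition by the quasi-smooth condition on derived algebraic spaces, trading the generality of the center for the regularity of the immersion; the role of $\A^1$-invariance is to bridge the two settings by identifying a classical abstract blow-up, locally cut out by a sequence $f_1,\ldots,f_n$, with the derived blow-up along the corresponding quasi-smooth derived zero-locus. Once the criterion is granted, \thmref{thm:KH cdh descent} follows formally by combining (a), (b), and (c) above.
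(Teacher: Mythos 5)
Your plan — applying the paper's cdh descent criterion and reducing to checking a short list of conditions for $\KH$ — is the right high-level strategy, and matches what the paper actually does. But the list of conditions you attribute to the criterion is wrong in a way that leaves genuine gaps, and you mislocate the key new input.

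The paper's criterion (\thmref{thm:cdh criterion}) does \emph{not} mention $\A^1$-invariance. Its hypotheses are: (i) $\sF(\emptyset)$ is terminal, (ii) Nisnevich descent, (iii) \emph{closed descent} (i.e.\ descent for closed squares where both maps are closed immersions, as in \examref{exam:closed square}), and (iv) descent for the classical shadows of quasi-smooth blow-up squares. You have (i), (ii), (iv), but you substitute $\A^1$-invariance for (iii), which is not equivalent. Closed descent for $\KH$ is a nontrivial classical fact (Thomason--Trobaugh Exer.~9.11(f), Weibel Cor.~4.10) that must be checked separately; $\A^1$-invariance is an ingredient in its proof but is not a condition of the criterion. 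Without (iii), the reduction of general abstract blow-up squares to quasi-smooth ones via Raynaud--Gruson platification --- which is the actual content of the criterion --- does not go through.

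Second, your verification of the blow-up condition only establishes that $\KH$ satisfies descent for quasi-smooth blow-up squares of \emph{derived} algebraic spaces. Condition (iv) of the criterion is about the \emph{underlying classical} squares, so one needs to know that $\KH(X) \to \KH(X_\cl)$ is an equivalence for derived $X$ (derived nilpotent invariance). This is precisely the result of Cisinski--Khan that the paper's introduction singles out as ``the main new input here,'' and it is what bridges the derived and classical settings --- not $\A^1$-invariance, as you claim in your final paragraph. Your heuristic of $\A^1$-invariance ``identifying a classical abstract blow-up with the derived blow-up'' does not correspond to any step in the proof; $\A^1$-invariance plays no role in the statement or proof of the criterion itself, and the classical-to-derived comparison for $\KH$ is a separate theorem, not a formal consequence of localization.

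In short: add closed descent as an explicit condition (and cite its classical proof), and replace the $\A^1$-invariance bridge with derived nilpotent invariance from \cite{CisinskiKhanKH}.
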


This was first proven by Haesemeyer \cite{Haesemeyer} for schemes over a field of characteristic zero, using resolution of singularities.
Cisinski's proof over general bases (noetherian schemes of finite dimension) relies on Ayoub's proper base change theorem in motivic homotopy theory.
A different proof of \thmref{thm:KH cdh descent} (also in the noetherian setting) was recently given by Kerz--Strunk--Tamme \cite[Thm.~C]{KerzStrunkTamme}, as an application of pro-cdh descent and their resolution of Weibel's conjecture on negative K-theory.
The proof we give here is more direct and uses a new criterion for cdh descent (see \thmref{thm:cdh criterion} for a more precise statement):

\begin{thmX}\label{thm:cdh intro}
Let $\sF$ be a Nisnevich sheaf of spectra on the category of quasi-compact quasi-separated algebraic spaces.
Then $\sF$ satisfies cdh descent if and only if it sends closed squares and quasi-smooth blow-up squares to cartesian squares.
\end{thmX}

\thmref{thm:cdh intro} can be compared to a similar criterion due to Haesemeyer, implicit in \cite{Haesemeyer}, which applies to Nisnevich sheaves of spectra on the category of schemes over a field $k$ of characteristic zero.
It asserts that for such a sheaf, cdh descent is equivalent to descent for finite cdh squares and regularly immersed blow-up squares.
Note that the first condition is stronger than descent for closed squares, while the second is weaker than descent for quasi-smooth blow-up squares: regularly immersed blow-up squares are precisely those quasi-smooth blow-up squares where all schemes appearing are underived.
For invariants of stable \inftyCats, a similar cdh descent criterion was noticed independently by Land and Tamme \cite[Thm.~A.2]{LandTamme}.

\thmref{thm:KH cdh descent} was extended to certain nice Artin stacks recently by Hoyois and Hoyois--Krishna \cite{HoyoisCdh,HoyoisKrishna}.
Our cdh descent criterion also applies in that setting (\remref{rem:variants of cdh criterion}\itemref{item:variants of cdh criterion/stacks}) and gives another potential approach to such results.

\ssec{}

The organization of this paper is as follows.
We begin in \secref{sec:prelim} with some background on derived algebraic geometry and on semi-orthogonal decompositions of stable \inftyCats.

\secref{sec:proj} is dedicated to the proof of \thmref{thm:Perf(P(E))}.
We first show that the semi-orthogonal decomposition exists on the larger stable \inftyCat $\Qcoh(\P(\sE))$ (\thmref{thm:Qcoh(P(E))}).
Then we show that it restricts to $\Perf(\P(\sE))$ (\ssecref{ssec:proof of Perf(P(E))}), and deduce the projective bundle formula (\corref{cor:E(P(E))}) for any additive invariant.

We follow a similar pattern in \secref{sec:blowup} to prove \thmref{thm:Perf(Bl)}.
There is a semi-orthogonal decomposition on $\Qcoh(\Bl_{Z/X})$ (\thmref{thm:Qcoh(Bl)}) which then restricts to $\Perf(\Bl_{Z/X})$ (\ssecref{ssec:proof of Perf(Bl)}).
This gives both the blow-up formula (\corref{cor:E(Bl_{Z/X})}) as well as \thmref{thm:blow-up descent} (\sssecref{sssec:proof of blow-up descent}) for additive invariants.
As input we prove a Grothendieck duality statement for virtual Cartier divisors (\propref{prop:omega_D/X}) that should be of independent interest.

\secref{sec:cdh} contains our results on cdh descent and KH.
We first give the general cdh descent criterion (\thmref{thm:cdh criterion}).
We apply this criterion to $\KH$ to give our proof of \thmref{thm:KH cdh descent} (\sssecref{sssec:proof of KH cdh descent}).

\ssec{}

I would like to thank Marc Hoyois, Charanya Ravi, and David Rydh for helpful discussions and comments on previous revisions.
I am especially grateful to David Rydh for pointing out the relevance of the resolution property in \secref{sec:cdh}.

\changelocaltocdepth{2}


\section{Preliminaries}
\label{sec:prelim}

Throughout the paper we work with the language of \inftyCats as in \cite{HTT,HA-20170918}.

\ssec{Derived algebraic geometry}

This paper is set in the world of derived algebraic geometry, as in \cite{HAG2,SAG-20180204,GaitsgoryRozenblyum}.

\sssec{}

Let $\SCRing$ denote the \inftyCat of \scrs.
A \emph{derived stack} is an étale sheaf of spaces $X : \SCRing \to \Spc$.
If $X$ is corepresentable by a \scr $A$, we write $X = \Spec(A)$ and call $X$ an \emph{affine derived scheme}.
A \emph{derived scheme} is a derived stack $X$ that admits a Zariski atlas by affine derived schemes, i.e., a jointly surjective family $(U_i \to X)_i$ of Zariski open immersions with each $U_i$ an affine derived scheme.
Allowing Nisnevich, étale or smooth atlases, respectively, gives rise to the notions of \emph{derived algebraic space}\footnotemark, \emph{derived Deligne--Mumford stack}, and \emph{derived Artin stack}.
\footnotetext{That this agrees with the classical notion of algebraic space (at least under quasi-compactness and quasi-separatedness hypotheses) follows from \cite[Prop.~5.7.6]{RaynaudGruson}.
That it agrees with Lurie's definition follows from \cite[Ex.~3.7.1.5]{SAG-20180204}.}
The precise definition is slightly more involved, see e.g. \cite[Vol.~I, Sect.~4.1]{GaitsgoryRozenblyum}.

Any derived stack $X$ admits an underlying classical stack which we denote $X_\cl$.
If $X$ is a derived scheme, algebraic space, Deligne--Mumford or Artin stack, then $X_\cl$ is a classical such.
For example, $\Spec(A)_\cl = \Spec(\pi_0(A))$ for a \scr $A$.

\sssec{}

Let $X$ be a derived scheme and let $f_1,\ldots,f_n \in \Gamma(X, \sO_X)$ be functions classifying a morphism $f : X \to \A^n$ to affine space.
The \emph{derived zero-locus} of these functions is given by the derived fibred product
  \begin{equation}\label{eq:quasi-smooth}
    \begin{tikzcd}
      Z \ar{r}\ar{d}
        & X \ar{d}{f}
      \\
      \{0\} \ar{r}
        & \A^n.
    \end{tikzcd}
  \end{equation}
If $X$ is classical, then $Z$ is classical if and only if the sequence $(f_1,\ldots,f_n)$ is regular in the sense of \cite{SGA6}, in which case $Z$ is regularly immersed.
A closed immersion of derived schemes $i : Z \to X$ is called \emph{quasi-smooth} (of virtual codimension $n$) if it is cut out Zariski-locally as the derived zero-locus of $n$ functions on $X$.
Equivalently, this means that $i$ is of finite presentation and its shifted cotangent complex $\sN_{Z/X} := \sL_{Z/X}[-1]$ is locally free (of rank $n$).
A closed immersion of derived Artin stacks is quasi-smooth if it satisfies this condition smooth-locally.

A morphism of derived schemes $f : Y \to X$ is quasi-smooth if it can be factored, Zariski-locally on $Y$, through a quasi-smooth closed immersion $i : Y \to X'$ and a smooth morphism $X' \to X$.
A morphism of derived Artin stacks is quasi-smooth if it satisfies this condition smooth-locally on $Y$.
We refer to \cite{KhanBlowup} for more details on quasi-smoothness.

\sssec{}

Important for us is the following construction from \cite{KhanBlowup}.
Given any quasi-smooth closed immersion $i : Z \to X$ of derived Artin stacks, there is an associated \emph{quasi-smooth blow-up square}:
  \begin{equation}\label{eq:qsbl square}
    \begin{tikzcd}
      D \ar{r}{i_D}\ar{d}{q}
        & \Bl_{Z/X} \ar{d}{p}
      \\
      Z \ar{r}{i}
        & X.
    \end{tikzcd}
  \end{equation}
Here $\Bl_{Z/X}$ is the blow-up of $X$ in $Z$, which is a quasi-smooth proper derived Artin stack over $X$, and $D = \P(\sN_{Z/X})$ is the projectivized normal bundle, which is a smooth proper derived Artin stack over $X$.
This square is universal with the following properties: (a) the morphism $i_D$ is a quasi-smooth closed immersion of virtual codimension $1$, i.e., a virtual effective Cartier divisor; (b) the underlying square of classical Artin stacks is cartesian; and (c) the canonical map $q^*\sN_{Z/X} \to \sN_{D/\Bl_{Z/X}}$ is surjective on $\pi_0$.
When $X$ is a derived scheme (resp. derived algebraic space, derived Deligne--Mumford stack), then so is $\Bl_{Z/X}$.

\sssec{}

Given a derived stack $X$, the stable \inftyCat of quasi-coherent sheaves $\Qcoh(X)$ is the limit
  \begin{equation*}
    \Qcoh(X) = \lim_{\Spec(A) \to X} \Qcoh(\Spec(A))
  \end{equation*}
taken over all morphisms $\Spec(A) \to X$ with $A \in \SCRing$.
Here $\Qcoh(\Spec(A))$ is the stable \inftyCat $\Mod_A$ of $A$-modules\footnotemark~in the sense of Lurie.
\footnotetext{Note that if $A$ is discrete (an ordinary commutative ring), then this is not the abelian category of discrete $A$-modules, but rather the derived \inftyCat of this abelian category as in \cite[Chap.~1]{HA-20170918}.}
Informally speaking, a quasi-coherent sheaf $\sF$ on $X$ is thus a collection of quasi-coherent sheaves $x^*(\sF) \in \Qcoh(\Spec(A))$, for every \scr $A$ and every $A$-point $x : \Spec(A) \to X$, together with a homotopy coherent system of compatibilities.

The full subcategory $\Perf(X) \subset \Qcoh(X)$ is similarly the limit
  \begin{equation*}
    \Perf(X) = \lim_{\Spec(A) \to X} \Perf(\Spec(A)),
  \end{equation*}
where $\Perf(\Spec(A))$ is the stable \inftyCat $\Mod_A^\perf$ of \emph{perfect} $A$-modules.
In other words, $\sF \in \Qcoh(X)$ belongs to $\Perf(X)$ if and only if $x^*(\sF)$ is perfect for every \scr $A$ and every morphism $x : \Spec(A) \to X$.

\sssec{}\label{sssec:Qcoh functoriality}

There is an inverse image functor $f^* : \Qcoh(X) \to \Qcoh(Y)$ for any morphism of derived stacks $f : Y \to X$.
It preserves perfect complexes and induces a functor $f^* : \Perf(X) \to \Perf(Y)$.
Regarded as presheaves of \inftyCats, the assignments $X \mapsto \Qcoh(X)$ and $X \mapsto \Perf(X)$ satisfy \emph{descent} for the fpqc topology (\cite[Cor.~D.6.3.3]{SAG-20180204}, \cite[Thm.~1.3.4]{GaitsgoryRozenblyum}).
This means in particular that given any fpqc covering family $(f_\alpha : X_\alpha \to X)_\alpha$, the family of inverse image functors $f_\alpha^* : \Qcoh(X) \to \Qcoh(X_\alpha)$ is jointly conservative.

If $f : Y \to X$ is quasi-compact and \emph{schematic}, in the sense that its fibre over any affine derived scheme is a derived scheme, then there is a direct image functor $f_*$, right adjoint to $f^*$, which commutes with colimits and satisfies a base change formula against inverse images (\cite[Prop.~2.5.4.5]{SAG-20180204}, \cite[Vol.~1, Chap.~3, Prop.~2.2.2]{GaitsgoryRozenblyum}).
If $f$ is proper, locally of finite presentation, and of finite tor-amplitude, then $f_*$ also preserves perfect complexes \cite[Thm.~6.1.3.2]{SAG-20180204}.

\ssec{Semi-orthogonal decompositions}

The following definitions were originally formulated by \cite{BondalKapranov} in the language of triangulated categories and are standard.

\begin{defn}
Let $\bC$ be a stable \inftyCat and $\bD$ a stable full subcategory.
An object $x \in \bC$ is \emph{left orthogonal}, resp. \emph{right orthogonal}, to $\bD$ if the mapping space $\Maps_\bC(x,d)$, resp. $\Maps_\bC(d,x)$, is contractible for all objects $d\in\bD$.
We let $^\perp\bD \subseteq \bC$ and $\bD^\perp \subseteq \bC$ denote the full subcategories of left orthogonal and right orthogonal objects, respectively.
\end{defn}

\begin{defn}\label{defn:SOD}
Let $\bC$ be a stable \inftyCat and let $\bC(0),\ldots,\bC(-n)$ be full stable subcategories.
Suppose that the following conditions hold:
\begin{defnlist}
  \item For all integers $i>j$, there is an inclusion $\bC(i) \subseteq~^\perp\bC(j)$.
  \item The \inftyCat $\bC$ is generated by the subcategories $\bC(0), \ldots, \bC(-n)$, under finite limits and finite colimits.
\end{defnlist}
Then we say that the sequence $(\bC(0),\ldots,\bC(-n))$ forms a \emph{semi-orthogonal decomposition} of $\bC$.
\end{defn}

Semi-orthogonal decompositions of length $2$ come from \emph{split short exact sequences} of stable \inftyCats, as in \cite{BlumbergGepnerTabuada}.

\begin{defn}\leavevmode

\begin{defnlist}
  \item
A \emph{short exact sequence} of small stable \inftyCats is a diagram
  \begin{equation*}
    \bC' \xrightarrow{i} \bC \xrightarrow{p} \bC'',
  \end{equation*}
where $i$ and $p$ are exact, the composite $p\circ i$ is null-homotopic, $i$ is fully faithful, and $p$ induces an equivalence $(\bC/\bC')^\idem \simeq (\bC'')^\idem$ (where $(-)^\idem$ denotes idempotent completion).
  \item
A short exact sequence of small stable \inftyCats
  \begin{equation*}
    \bC' \xrightarrow{i} \bC \xrightarrow{p} \bC''
  \end{equation*}
is \emph{split} if there exist functors $q : \bC \to \bC'$ and $j : \bC'' \to \bC$, right adjoint to $i$ and $p$, respectively, such that the unit $\id \to q\circ i$ and co-unit $p\circ j \to \id$ are invertible.
\end{defnlist}
\end{defn}

\begin{rem}\label{rem:SOD to split SES}
Let $\bC$ be a small stable \inftyCat, and let $(\bC(0), \bC(-1))$ be a semi-orthogonal decomposition.
Then for any object $x \in \bC$, there exists an exact triangle
  \begin{equation*}
    x(0) \to x \to x(-1),
  \end{equation*}
where $x(0) \in \bC(0)$ and $x(-1) \in \bC(-1)$.
To see this, simply observe that the full subcategory spanned by objects $x$ for which such a triangle exists, is closed under finite limits and colimits, and contains $\bC(0)$ and $\bC(-1)$.
Moreover, the assignments $x \mapsto x(0)$ and $x \mapsto x(-1)$ determine well-defined functors $q : \bC \to \bC(0)$ and $p : \bC \to \bC(-1)$, respectively, which are right and left adjoint, respectively, to the inclusions (see e.g. \cite[Rem.~7.2.0.2]{SAG-20180204}).
It follows from this that any semi-orthogonal decomposition $(\bC(0),\bC(-1))$ induces a split short exact sequence
  \begin{equation*}
    \bC(0) \to \bC \xrightarrow{p} \bC(-1).
  \end{equation*}
\end{rem}

\begin{lem}\label{lem:SOD filtration}
Let $\bC$ be a stable \inftyCat, and let $(\bC(0),\ldots,\bC(-n))$ be a sequence of full stable subcategories forming a semi-orthogonal decomposition of $\bC$.
For each $0 \le m \le n$, let $\bC_{\le -m} \subseteq \bC$ denote the full stable subcategory generated by objects in the union $\bC(-m)\cup\cdots\cup\bC(-n)$, and let $\bC_{\le -n-1} \subseteq \bC$ denote the full subcategory spanned by the zero object.
Then there are split short exact sequences
  \begin{equation*}
    \bC_{\le -m-1} \hook \bC_{\le -m} \to \bC(-m)
  \end{equation*}
for each $0\le m\le n$.
\end{lem}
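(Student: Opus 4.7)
The approach is to reduce the statement to a two-term semi-orthogonal decomposition at each level and then invoke Remark~\ref{rem:SOD to split SES}.

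For each $0 \le m \le n$, I first verify that the pair $(\bC(-m), \bC_{\le -m-1})$ forms a semi-orthogonal decomposition of $\bC_{\le -m}$. The generation condition is immediate: by construction, $\bC_{\le -m}$ is generated under finite limits and colimits by $\bC(-m), \bC(-m-1), \ldots, \bC(-n)$, and all of $\bC(-m-1), \ldots, \bC(-n)$ lie in $\bC_{\le -m-1}$, so the two subcategories $\bC(-m)$ and $\bC_{\le -m-1}$ generate $\bC_{\le -m}$.

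For the semi-orthogonality, I need to establish $\bC(-m) \subseteq {}^\perp \bC_{\le -m-1}$. The original SOD provides $\bC(-m) \subseteq {}^\perp \bC(-j)$ for every $m < j \le n$. The key observation is that, for any fixed object $x$, the full subcategory $\{y : \Maps(x, y) \simeq 0\}$ is closed under finite limits and colimits, because $\Maps(x, -)$ is an exact functor on a stable \inftyCat. Thus the vanishing $\Maps(x, -) \simeq 0$ extends from the generators $\bC(-m-1), \ldots, \bC(-n)$ to the entire stable subcategory $\bC_{\le -m-1}$ they generate, yielding the desired inclusion.

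With this two-term SOD of $\bC_{\le -m}$ in hand, Remark~\ref{rem:SOD to split SES} directly produces the corresponding split short exact sequence relating $\bC(-m)$, $\bC_{\le -m}$, and $\bC_{\le -m-1}$. The only step requiring genuine verification is the orthogonality extension above, which is a routine but essential closure argument; everything else is a formal consequence of the definitions together with the previously established remark.
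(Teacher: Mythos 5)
Your proposal follows the same route as the paper's (very terse) proof: exhibit, for each $m$, the two-term semi-orthogonal decomposition $(\bC(-m),\bC_{\le -m-1})$ of $\bC_{\le -m}$ and invoke \remref{rem:SOD to split SES}. There is, however, one small imprecision in your closure step. The full subcategory of those $y$ with $\Maps_\bC(x,y)$ contractible is closed under finite limits, since the corepresentable functor $\Maps_\bC(x,-)$ preserves limits; but it is \emph{not} in general closed under finite colimits, because $\Maps_\bC(x,-)$ takes values in spaces rather than spectra, and a fibre sequence of pointed spaces with contractible fibre and total space can still have a base with nontrivial $\pi_0$. The clean fix is to replace the mapping space by the mapping \emph{spectrum} out of $x$: the full subcategory of $y$ for which this spectrum vanishes is a stable subcategory of $\bC$, hence closed under both finite limits and finite colimits. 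Since the generators $\bC(-m-1),\ldots,\bC(-n)$ are themselves stable subcategories (so closed under shifts), the orthogonality $\bC(-m)\subseteq{}^\perp\bC(-j)$ stated in terms of mapping spaces is equivalent, on those subcategories, to the vanishing of the mapping spectra. With that adjustment your closure argument is correct and the rest of the proof matches the paper's.
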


\begin{proof}
It follows from the definitions that for each $0\le m\le n$, the sequence $(\bC(-m), \bC_{\le -m-1})$ forms a semi-orthogonal decomposition of $\bC$.
Therefore the claim follows from \remref{rem:SOD to split SES}.
\end{proof}

\ssec{Additive and localizing invariants}

The following definition is from \cite{BlumbergGepnerTabuada}, except that we do not require commutativity with filtered colimits.

\begin{defn}\label{defn:additive invariant}
Let $\bA$ be a stable presentable \inftyCat.
Let $E$ be an $\bA$-valued functor from the \inftyCat of small stable \inftyCats and exact functors.

\begin{defnlist}
\item
We say that $E$ is an \emph{additive invariant} if for any split short exact sequence
  \begin{equation*}
    \bC' \xrightarrow{i} \bC \xrightarrow{p} \bC'',
  \end{equation*}
the induced map
  \begin{equation*}
    E(\bC') \oplus E(\bC'') \xrightarrow{(i,j)} E(\bC)
  \end{equation*}
is invertible, where $j$ is a right adjoint to $p$.

\item
We say that $E$ is a \emph{localizing invariant} if for any short exact sequence
  \begin{equation*}
    \bC' \xrightarrow{i} \bC \xrightarrow{p} \bC'',
  \end{equation*}
the induced diagram
  \begin{equation*}
    E(\bC') \to E(\bC) \to E(\bC'')
  \end{equation*}
is an exact triangle.
\end{defnlist}
\end{defn}

\begin{rem}
Any localizing invariant is also additive.
\end{rem}

\begin{lem}\label{lem:additive}
Let $\bC$ be a stable \inftyCat, and let $(\bC(0),\ldots,\bC(-n))$ be a sequence of full stable subcategories forming a semi-orthogonal decomposition of $\bC$.
Then for any additive invariant $E$ there is a canonical isomorphism
  \begin{equation*}
    E(\bC) \simeq \bigoplus_{m=0}^n E(\bC(-m)).
  \end{equation*}
\end{lem}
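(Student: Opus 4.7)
The plan is to iterate the filtration from \lemref{lem:SOD filtration} and apply additivity of $E$ at each stage. Recall that for each $0 \le m \le n$, that lemma produces a split short exact sequence
$$\bC_{\le -m-1} \hook \bC_{\le -m} \to \bC(-m),$$
with $\bC_{\le 0} = \bC$ and $\bC_{\le -n-1}$ the zero subcategory. Applying $E$ and invoking the defining property in \defnref{defn:additive invariant} yields, for each such $m$, an equivalence
$$E(\bC_{\le -m}) \simeq E(\bC_{\le -m-1}) \oplus E(\bC(-m)).$$

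Next I would establish the base case $E(\bC_{\le -n-1}) \simeq 0$. This follows from additivity applied to the trivial split exact sequence of zero categories: the resulting codiagonal $E(0) \oplus E(0) \to E(0)$ must be an equivalence, which in the stable presentable target \inftyCat $\bA$ forces $E(0) \simeq 0$. With this in hand, a straightforward downward induction on $m$ (from $m = n$ down to $m = 0$) combines the filtration equivalences to give
$$E(\bC) = E(\bC_{\le 0}) \simeq \bigoplus_{m=0}^n E(\bC(-m)),$$
as desired.

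There is no serious obstacle here: the conceptual content has already been packaged into \remref{rem:SOD to split SES} and \lemref{lem:SOD filtration}, which together reduce an arbitrary semi-orthogonal decomposition to an iterated splitting of length-$2$ pieces. The present lemma is a direct combinatorial consequence of additivity once those tools are in place.
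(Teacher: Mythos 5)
Your proof is correct and follows the same route as the paper, which simply deduces the lemma from \lemref{lem:SOD filtration} by iterating additivity along the filtration $\bC = \bC_{\le 0} \supseteq \cdots \supseteq \bC_{\le -n-1} = 0$; your extra verification that $E(0)\simeq 0$ (or, alternatively, just noting $\bC_{\le -n}=\bC(-n)$ as the base case) fills in the only detail the paper leaves implicit.
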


\begin{proof}
Follows immediately from \lemref{lem:SOD filtration}.
\end{proof}

\section{The projective bundle formula}
\label{sec:proj}

\ssec{Projective bundles}
\label{ssec:proj/proj}

Let $X$ be a derived stack and $\sE$ a locally free $\sO_X$-module of finite rank.
Recall that the \emph{projective bundle} associated to $\sE$ is a derived stack $\P(\sE)$ over $X$ equipped with an invertible sheaf $\sO(1)$ together with a surjection $\sE \to \sO(1)$.
More precisely, for any derived scheme $S$ over $X$, with structural morphism $x : S \to X$, the space of $S$-points of $\P(\sE)$ is the space of pairs $(\sL, u)$, where $\sL$ is a locally free $\sO_S$-module of rank $1$, and $u : x^*(\sE) \to \sL$ is surjective on $\pi_0$.
We recall the standard properties of this construction:

\begin{prop}\leavevmode
\begin{thmlist}
  \item
If $f : X' \to X$ is a morphism of derived stacks, then there is a canonical isomorphism $\P(f^*(\sE)) \to \P(\sE) \fibprod_X X'$ of derived stacks over $X'$.
  \item
The projection $\P(\sE)\to X$ is proper and schematic.
In particular, if $X$ is a derived scheme (resp. \das, derived Deligne--Mumford stack, derived Artin stack), then the same holds for the derived stack $\P(\sE)$.
  \item
The relative cotangent complex $\sL_{\P(\sE)/X}$ is canonically isomorphic to $\sF \otimes \sO(-1)$, where the locally free sheaf $\sF$ is the fibre of the canonical map $\sE \to \sO(1)$.
In particular, the morphism $\P(\sE) \to X$ is smooth of relative dimension equal to $\rk(\sE) - 1$.
\end{thmlist}
\end{prop}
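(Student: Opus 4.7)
The plan is to handle the three parts in order, using the functor-of-points description of $\P(\sE)$ throughout.

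Part (i) is formal. For any derived scheme $S$, the space of $S$-points of $\P(\sE) \fibprod_X X'$ consists of a morphism $\sigma : S \to X'$ together with a pair $(\sL, u : (f\circ\sigma)^*\sE \to \sL)$ where $\sL$ is a locally free $\sO_S$-module of rank one and $u$ is surjective on $\pi_0$. Since $(f\circ\sigma)^*\sE \simeq \sigma^* f^*\sE$ canonically, this is precisely the data of an $S$-point of $\P(f^*\sE)$, and the identification is natural in $S$; Yoneda concludes.

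For part (ii), I would use (i) to reduce to the case $X = \Spec(A)$ affine, and then Zariski-localize further on $X$ so that $\sE \simeq \sO_X^{\oplus n+1}$. Then $\P(\sE) \simeq \P^n_A$ is the standard derived projective space, which is schematic via the cover by the $n+1$ standard affine opens $U_i \simeq \A^n_A$ (the loci where the $i$-th component of the tautological quotient is invertible), and proper over $\Spec(A)$, cf.\ \cite{SAG-20180204}. Schematicness and properness of $\P(\sE) \to X$ for a general derived stack $X$ then follow by Zariski descent together with the base change established in (i).

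For part (iii), the key is the Euler fibre sequence. Pulling the tautological surjection $\sE \to \sO(1)$ back along $q$ yields a canonical cofibre sequence $\sF \to q^*\sE \to \sO(1)$ in $\Qcoh(\P(\sE))$, in which $\sF$ is locally free of rank $\rk(\sE) - 1$ since it is the fibre of a $\pi_0$-surjection between locally free sheaves of ranks $\rk(\sE)$ and $1$. The sheaf $\sF \otimes \sO(-1)$ is therefore locally free of rank $\rk(\sE) - 1$. To identify it with $\sL_{\P(\sE)/X}$, I would construct a canonical comparison map between the two, for instance by classifying infinitesimal deformations of the tautological pair $(\sO(1), q^*\sE \to \sO(1))$ on $\P(\sE)$ over $X$, and then verify this map is an equivalence. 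The main obstacle is this last verification; by (i) and local triviality of $\sE$ it reduces to the model case $\P^n_A$ over $\Spec(A)$, where the statement can be checked directly on the standard affine cover by $\A^n_A$'s, matching the explicit presentation of $\Omega^1_{\A^n_A/A}$ twisted by $\sO(-1)$. Smoothness of $q$ and the relative dimension $\rk(\sE) - 1$ are immediate consequences.
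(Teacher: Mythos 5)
The paper itself gives no proof here: it simply says ``We recall the standard properties of this construction'' and states the proposition, so there is no ``paper's proof'' to compare against. Your write-up is a reasonable account of the standard arguments one would give.

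Part (i) is, as you say, a routine Yoneda/functor-of-points argument and is fine. Part (ii) is correct as sketched: by (i) the question of schematicness and properness is local on $X$, and once $X = \Spec(A)$ one may Zariski-localize on $A$ to trivialize $\sE$, reducing to the case of $\P^n_A$ which is handled in \cite{SAG-20180204}. The one thing worth being explicit about is that the ``reduce to $X$ affine'' step is by definition of schematic (and of proper schematic) morphism: one tests against maps $\Spec(A) \to X$, uses (i) to identify the fiber product with $\P(g^*\sE)$, and then localizes on $\Spec(A)$.

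Part (iii) is the only place where you have sketched rather than proved. The Euler sequence $\sF \to q^*\sE \to \sO(1)$ and the rank count are fine, and reducing to $\P^n_A$ is the right move. But the construction of the comparison map $\sF \otimes \sO(-1) \to \sL_{\P(\sE)/X}$ ``by classifying infinitesimal deformations of the tautological pair'' is stated, not carried out, and you flag this yourself. If you want to tighten this, one clean route is: (a) build the comparison map once and for all on the universal $\P(\sE)$ via the square-zero-extension description of the cotangent complex applied to the functor of points of $\P(\sE)$ over $X$ (the data of $(\sL,u)$ deforms along $\sL \otimes \pi_0(\sI) \to \sL$ and $\Hom(q^*\sE, \sL \otimes \sI)$, which assembles into $\sF \otimes \sO(-1)$); and then (b) since both source and target are perfect of tor-amplitude $[0,0]$ and rank $\rk(\sE)-1$, it suffices to check the map induces an isomorphism on $\pi_0$ after base change to classical points, which is the classical Euler sequence. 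This avoids having to recompute $\Omega^1_{\A^n_A/A}$ explicitly on each chart.

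Bottom line: your approach matches what the paper implicitly invokes as standard; part (iii) needs the comparison map actually constructed before the proof is complete, but the strategy is sound.
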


\begin{prop}[Serre]\label{prop:Serre}
Let $X$ be a derived Artin stack, and $\sE$ a locally free sheaf of rank $n+1$, $n\ge 0$.
If $q : \P(\sE) \to X$ denotes the associated projective bundle, then we have canonical isomorphisms
  \begin{equation*}
    q_*(\sO(0)) \simeq \sO_X,
    \qquad
    q_*(\sO(-m)) \simeq 0
    \quad (1\le m\le n)
  \end{equation*}
in $\Qcoh(X)$.
\end{prop}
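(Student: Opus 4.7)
The plan is to reduce, by descent and base change, to the classical Serre cohomology computation for projective space over $\Spec(\bbZ)$. Since $\Qcoh$ satisfies smooth (in fact fpqc) descent and the projection $q$ is schematic and proper, so that $q_*$ satisfies the base change formula (\sssecref{sssec:Qcoh functoriality}), the assertion is smooth-local on $X$. I may therefore assume that $X = \Spec(A)$ is an affine derived scheme. A further Zariski-local reduction on $X$ allows me to assume that the locally free sheaf $\sE$ is free, so $\sE \simeq \sO_X^{n+1}$.

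In this situation $\P(\sE)$ is the base change of the classical projective space $\P^n_{\bbZ}$ along the structure morphism $g : X \to \Spec(\bbZ)$, fitting into a cartesian square
\[
\begin{tikzcd}
\P(\sE) \ar{r}{g'}\ar{d}{q} & \P^n_{\bbZ} \ar{d}{\pi} \\
X \ar{r}{g} & \Spec(\bbZ),
\end{tikzcd}
\]
and the tautological line bundle $\sO(1)$ on $\P(\sE)$ coincides, by its universal characterization, with $(g')^*\sO_{\P^n_{\bbZ}}(1)$. Applying the base change isomorphism for $q_*$ then yields
\[
q_*\sO(-m) \simeq g^*\pi_*\sO_{\P^n_{\bbZ}}(-m)
\]
for each integer $m$.

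It then remains to invoke the classical computation of $\pi_*\sO_{\P^n_{\bbZ}}(-m)$ for the smooth proper morphism $\pi : \P^n_{\bbZ} \to \Spec(\bbZ)$: this is $\sO_{\Spec(\bbZ)}$ for $m = 0$, and vanishes for $1 \le m \le n$, since in that range all cohomology groups $H^i(\P^n_{\bbZ},\sO(-m))$ vanish (no global sections in negative degree, vanishing of intermediate cohomology of $\P^n$, and vanishing of the top cohomology $H^n(\P^n_{\bbZ},\sO(-m))$ whenever $m \le n$). Combining this with the base change formula gives the desired identifications in $\Qcoh(X)$.

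The only real obstacle is organizational: one must verify that, in the derived setting, the projective bundle $\P(\sE)$, the projection $q$, and the tautological line bundle $\sO(1)$ all pull back correctly along smooth covers and along the base change to $\Spec(\bbZ)$, so that the base change formula for $q_*$ can legitimately be applied. Once that is secured, the actual content of the statement is purely classical.
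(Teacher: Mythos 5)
Your proposal is correct and follows essentially the same route as the paper: write down the comparison maps, use fpqc descent and the base change formula for $q_*$ to reduce to $X$ affine with $\sE$ free, and then appeal to Serre's computation. The only divergence is the last step: the paper simply cites Lurie's derived generalization of Serre's theorem for $\P^n_A$, whereas you perform one more base change along $X = \Spec(A) \to \Spec(\bbZ)$ and invoke the classical cohomology of $\P^n_{\bbZ}$; this is legitimate because in the derived setting the base change formula for the schematic proper morphism $\pi : \P^n_{\bbZ} \to \Spec(\bbZ)$ requires no flatness or Tor-independence hypotheses, and $\P(\sO_X^{n+1})$ is indeed the derived pullback of $\P^n_{\bbZ}$ with $\sO(1)$ pulling back to $\sO(1)$. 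So your version buys a slightly more self-contained argument (only the classical \v{C}ech computation over $\bbZ$ is needed) at the cost of checking these compatibilities, which you correctly identify as the only real work. One small point worth making explicit, as the paper does: for the identification $q_*(\sO(0)) \simeq \sO_X$ to be a smooth-local assertion you should first fix the global candidate map, namely the unit $\sO_X \to q_*q^*(\sO_X) = q_*(\sO(0))$ of the adjunction $(q^*, q_*)$, and then check its invertibility after pullback; for $q_*(\sO(-m))$, $1 \le m \le n$, vanishing is a property and no such map is needed.
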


\begin{proof}
There is a canonical map $\sO_X \to q_*(\sO(0))$, the unit of the adjunction $(q^*,q_*)$, and there is a unique map $0 \to q_*(\sO(-m))$ for each $m$.
To show that these are invertible, we may use fpqc descent and base change to the case where $X$ is affine and $\sE$ is free.
Then this is Serre's computation, as generalized to the derived setting by Lurie \cite[Thm.~5.4.2.6]{SAG-20180204}.
\end{proof}

\ssec{Semi-orthogonal decomposition on \texorpdfstring{$\Qcoh(\P(\sE))$}{Qcoh(P(E))}}
\label{ssec:proj/qcoh}

In this subsection we will show that the stable \inftyCat $\Qcoh(\P(\sE))$ admits a canonical semi-orthogonal decomposition.

\begin{thm}\label{thm:Qcoh(P(E))}
Let $X$ be a derived Artin stack.
Let $\sE$ be a locally free $\sO_X$-module of rank $n+1$, $n\ge 0$, and $q : \P(\sE) \to X$ the associated projective bundle.
Then we have:
\begin{thmlist}
  \item\label{item:Qcoh(P(E))/fully faithful}
For every integer $k\in\bZ$, the assignment $\sF \mapsto q^*(\sF)\otimes\sO(k)$ defines a fully faithful functor $\Qcoh(X) \to \Qcoh(\P(\sE))$.
  \item\label{item:Qcoh(P(E))/SOD}
For every integer $k\in\bZ$, let $\bC(k) \subset \Qcoh(\P(\sE))$ denote the essential image of the functor in (i).
Then the subcategories $\bC(k),\ldots,\bC(k-n)$ form a semi-orthogonal decomposition of $\Qcoh(\P(\sE))$.
\end{thmlist}
\end{thm}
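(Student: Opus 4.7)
The plan is to prove (i) and the semi-orthogonality half of (ii) by direct computations using the projection formula and \propref{prop:Serre}, and then to use a derived Beilinson resolution of the diagonal to establish the generation half of (ii).

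For (i), the projection formula and \propref{prop:Serre} (which gives $q_*\sO_{\P(\sE)} \simeq \sO_X$) yield
\[
  \Maps(q^*\sF \otimes \sO(k), q^*\sG \otimes \sO(k)) \simeq \Maps(\sF, q_*q^*\sG) \simeq \Maps(\sF, \sG \otimes q_*\sO) \simeq \Maps(\sF, \sG),
\]
so the functor is fully faithful. The semi-orthogonality in (ii) is the same calculation: for $0 \le a < b \le n$,
\[
  \Maps(q^*\sF(k-a), q^*\sG(k-b)) \simeq \Maps(\sF, \sG \otimes q_*\sO(a-b)) = 0
\]
by \propref{prop:Serre}, since $-n \le a-b \le -1$.

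For the generation half of (ii), let $\P = \P(\sE)$ with projections $p_1, p_2 \colon \P \fibprod_X \P \to \P$ and diagonal $\Delta$. Using the canonical identification $p_1^* q^* \sE \simeq p_2^* q^* \sE$ together with the Euler fiber sequence $\sF \to q^*\sE \to \sO(1)$, the composite $p_1^*\sF \to p_1^*q^*\sE \simeq p_2^*q^*\sE \to p_2^*\sO(1)$ defines a section $s \colon \sO \to \sV$ of the locally free sheaf $\sV := p_1^*\sF^\vee \otimes p_2^*\sO(1)$ of rank $n$, whose classical vanishing locus is exactly $\Delta$. Its derived zero locus $Z(s)$ is quasi-smooth of virtual codimension $n$ with conormal $\sV^\vee|_{Z(s)} = p_1^*\sF \otimes p_2^*\sO(-1)|_{Z(s)}$; restricting along $\Delta$ yields $\sF \otimes \sO(-1) \simeq \sL_{\P/X}$, which agrees with the conormal of the (classical, regular) diagonal. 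Hence $Z(s) = \Delta$ as derived closed immersions, and the Koszul complex of $s$ exhibits $\Delta_* \sO_\P$ as a finite iterated cofiber with terms
\[
  \wedge^k \sV^\vee \simeq p_1^*(\wedge^k \sF) \otimes p_2^*\sO(-k), \qquad k = 0, 1, \ldots, n.
\]

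Starting from $\sG \simeq p_{2*}(p_1^*\sG \otimes \Delta_*\sO_\P)$, applying the projection formula for $p_2$ and smooth base change $p_{2*}p_1^* \simeq q^*q_*$ term by term yields
\[
  q^*q_*(\sG \otimes \wedge^k\sF) \otimes \sO(-k) \in \bC(-k),
\]
so $\sG$ lies in the finite (co)limit closure of $\bC(0) \cup \cdots \cup \bC(-n)$. The case of general $k$ follows by applying the argument to $\sG \otimes \sO(-k)$ and twisting back. The main obstacle is the derived identification $Z(s) = \Delta$ --- equivalently, establishing the Beilinson/Koszul resolution of $\Delta_*\sO_\P$ in the derived setting --- which I would handle by verifying that the derivative of $s$ along $\Delta$ induces the canonical isomorphism $N_\Delta \simeq \sV|_\Delta$ between the classical normal bundle of $\Delta$ and the restriction of $\sV$, so that $s$ is a regular section in the derived sense.
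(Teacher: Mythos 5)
Your treatment of (i) and of the semi-orthogonality half of (ii) is in substance the paper's own argument (projection formula plus \propref{prop:Serre}; the paper phrases (i) as invertibility of the unit map and reduces to $\sF=\sO_X$ by fpqc descent, which is the same computation). For the generation half you take a genuinely different route: the paper never touches the diagonal, but instead iterates cofibres of counit maps $q^*q_*(\sG_{m-1}\otimes\sO(1))\to \sG_{m-1}\otimes\sO(1)$ and kills the final term after an fpqc reduction to $\P^n_R$ using \lemref{lem:O(n+1)} and \lemref{lem:surjection from O(m)'s}; your Beilinson resolution of $\Delta_*\sO_{\P(\sE)}$ instead produces an explicit finite filtration of an arbitrary $\sG$ with graded pieces $q^*q_*(\sG\otimes\wedge^k\sF)\otimes\sO(-k)$ (up to the Koszul shifts $[k]$), which is stronger than bare generation, and it is amusing to note that \lemref{lem:O(n+1)} is the same Koszul datum in disguise, so the two proofs consume the same geometric input packaged differently. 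What the paper's route buys is that everything is reduced to statements about twists of $\sO$ on $\P^n_R$ already in the literature; what your route must pay for is the step you correctly single out, the derived identification $Z(s)\simeq\Delta$. Your sketch of that step is the right one, but to close it you need all three of: the canonical nullhomotopy of $\Delta^*s$ supplied by the fibre sequence $\sF\to q^*\sE\to\sO(1)$, which is what produces a closed immersion $\Delta\to Z(s)$ over $\P(\sE)\fibprod_X\P(\sE)$; the classical statement that the vanishing locus of $s$ on the classical truncation is the classical diagonal; and the conormal comparison, which gives $\sL_{\Delta/Z(s)}\simeq 0$ and upgrades the classical identification to an equivalence (note that since $X$ is derived, $\Delta$ is not classical or regular, only quasi-smooth of virtual codimension $n$, but this changes nothing). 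All of these, like the Koszul filtration of the structure sheaf of a derived zero locus, are compatible with base change in $X$, so they reduce by fpqc descent to the universal case of $X$ affine classical with $\sE$ free, where they are the standard Beilinson computations; you should also record that the projection formula for $p_2$ and the base-change identification $p_{2*}p_1^*\simeq q^*q_*$ are available because $q$ is quasi-compact and schematic (\sssecref{sssec:Qcoh functoriality}). With those points written out, your argument is a correct and somewhat more informative alternative to the one in the paper.
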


We will need the following facts (see Lemmas ~7.2.2.2 and 5.6.2.2 in \cite{SAG-20180204}):

\begin{lem}\label{lem:O(n+1)}
Let $R$ be a \scr and $X = \Spec(R)$.
Denote by $\P^n_R = \P(\sO^{n+1}_X)$ the $n$-dimensional projective space over $R$.
Then for every integer $m\in\bZ$, there is a canonical isomorphism
  \begin{equation*}
    \colim_{J \subsetneq [n]} \sO(m+\abs{J}) \isoto \sO(m+n+1)
  \end{equation*}
in $\Qcoh(\P^n_R)$, where the colimit is taken over the proper subsets $J$ of the set $[n] = \{0,1,\ldots,n\}$, and $0\le\abs{J}\le n$ denotes the cardinality of such a subset.
\end{lem}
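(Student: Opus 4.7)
The plan is to realize the asserted colimit as the cofiber of a natural comparison map coming from a Koszul-type cube, and then to show that this cofiber vanishes. For each $i \in [n]$, let $x_i : \sO \to \sO(1)$ denote the $i$-th tautological section on $\P^n_R$, and consider the cubical diagram $F : \mathcal{P}([n]) \to \Qcoh(\P^n_R)$ with $F(J) = \sO(m+\abs{J})$ and whose transition map $F(J) \to F(J \cup \{i\})$ along the $i$-th axis is multiplication by $x_i$. Formally, $F$ is the $\sO(m)$-twist of the external tensor product $G_0 \boxtimes \cdots \boxtimes G_n$ of the one-dimensional diagrams $G_i = (\sO \xrightarrow{x_i} \sO(1))$. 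Since $[n]$ is terminal in $\mathcal{P}([n])$, the colimit of the full cube is $F([n]) = \sO(m+n+1)$, and by definition of the total cofiber there is an exact triangle
\[
\colim_{J \subsetneq [n]} F(J) \to F([n]) \to \mathrm{tcof}(F)
\]
in $\Qcoh(\P^n_R)$. The lemma thus reduces to the vanishing $\mathrm{tcof}(F) \simeq 0$.

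Next I would invoke the standard identity that in a stable symmetric monoidal $\infty$-category, the total cofiber of an external tensor product of arrows is the tensor product of their individual cofibers (this is a straightforward induction from the case of two arrows, where it amounts to the fact that $\otimes$ is exact separately in each variable). This yields
\[
\mathrm{tcof}(F) \simeq \sO(m) \otimes \bigotimes_{i=0}^{n} \mathrm{cof}(\sO \xrightarrow{x_i} \sO(1)).
\]
Each factor $K_i = \mathrm{cof}(x_i)$ is, up to a line-bundle twist, the direct image of the structure sheaf of the hyperplane $H_i = \{x_i = 0\} \subset \P^n_R$, since $x_i$ is a regular section of the line bundle $\sO(1)$.

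Finally, by iterated base change and the projection formula, the tensor product $\bigotimes_{i=0}^{n} K_i$ is (up to a line-bundle twist) the direct image of the structure sheaf of the derived fibered intersection $H_0 \times_{\P^n_R} \cdots \times_{\P^n_R} H_n$. But this intersection is empty: on the standard affine open cover of $\P^n_R$ by the charts $D_+(x_j)$ for $j = 0, \ldots, n$, the section $x_j$ is a unit on $D_+(x_j)$, so $H_j$ pulls back to the empty scheme on that chart, whence so does the whole fibered product. Consequently $\mathrm{tcof}(F) \simeq 0$, completing the proof. The main obstacle is really the first step, namely constructing the cube $F$ with coherent structure and verifying the external-tensor decomposition of its total cofiber; this is formal in the stable $\infty$-categorical setup, but can alternatively be bypassed by sheafifying the classical Koszul resolution of $\sO(m+n+1)$ on $\P^n_R$ associated to the sequence $(x_0, \ldots, x_n)$ of graded $R[x_0, \ldots, x_n]$-module generators, which exhibits exactly the stated presentation.
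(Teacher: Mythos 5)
Your argument is correct, but note that the paper does not actually prove this lemma: it is quoted as a known fact with a pointer to Lurie's \emph{Spectral Algebraic Geometry} \cite{SAG-20180204}, so your write-up is a self-contained substitute rather than a parallel of an argument in the text. The substance of your proof --- building the $(n+1)$-cube $J \mapsto \sO(m+\abs{J})$ as the $\sO(m)$-twist of the tensor product of the arrows $\sO \xrightarrow{x_i} \sO(1)$, identifying the cofibre of the comparison map with the total cofibre of that cube, factoring the total cofibre as $\sO(m)\otimes\bigotimes_{i=0}^{n}\Cofib(x_i)$ by exactness of $\otimes$ in each variable, and then using base change and the projection formula to recognise this as a twist of the pushforward of the structure sheaf of the derived intersection of the $n+1$ coordinate hyperplanes, which is empty --- is exactly the Koszul-complex mechanism underlying the cited result, transported to $\Qcoh(\P^n_R)$. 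All the steps you invoke are available in this derived setting: the tensor-product construction supplies the homotopy coherence of the cube for free; $\Cofib(\sO(-1)\xrightarrow{x_i}\sO)$ is the pushforward of the structure sheaf of the derived vanishing locus of $x_i$ for an arbitrary \scr $R$ (no regularity needed, though here the locus is just $\P^{n-1}_R$); and a derived scheme with empty underlying classical scheme is empty, so the final vanishing is legitimate. It is worth saying explicitly that the map you produce is the canonical one of the statement, namely the one whose transition maps are multiplication by the tautological sections. What your route buys is independence from the external citation, together with a geometric identification of the obstruction as supported on the empty common zero locus of $x_0,\dots,x_n$; what the paper's route buys is brevity.
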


\begin{lem}\label{lem:surjection from O(m)'s}
Let $R$ be a \scr and $X = \Spec(R)$.
Denote by $\P^n_R = \P(\sO^{n+1}_X)$ the $n$-dimensional projective space over $R$.
Then for any connective quasi-coherent sheaf $\sF \in \Qcoh(\P^n_R)$, there exists a map
  \begin{equation*}
    \bigoplus_\alpha \sO(d_\alpha) \to \sF,
  \end{equation*}
with $d_\alpha \in \bZ$, which is surjective on $\pi_0$.
\end{lem}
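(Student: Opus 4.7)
The plan is to reduce the problem to the classical setting via the truncation $\pi_0(\sF)$, and then lift the resulting classical surjection to $\Qcoh(\P^n_R)$.

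First, I would establish the classical analogue: for any ordinary commutative ring $R_0$ and any classical quasi-coherent sheaf $\sG$ on $\P^n_{R_0}$, there is a surjection $\bigoplus_\alpha \sO(d_\alpha) \twoheadrightarrow \sG$. Cover $\P^n_{R_0}$ by the standard affine opens $D_+(x_i) \simeq \Spec R_0[x_0/x_i,\ldots,x_n/x_i]$. For each local section $s \in \sG(D_+(x_i))$, some product $x_i^m \cdot s$ extends to a global section $t \in H^0(\P^n_{R_0}, \sG(m))$, which determines a map $\sO(-m) \to \sG$ whose image contains $s$ on $D_+(x_i)$. Direct summing all such maps, over all $i$ and all local sections, produces the required surjection. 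Applying this to the classical sheaf $\pi_0(\sF)$ on $\P^n_{\pi_0(R)}$ yields a surjection $\phi_0 \colon \bigoplus_\alpha \sO(d_\alpha) \to \pi_0(\sF)$.

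The main obstacle is to lift $\phi_0$ to a map in $\Qcoh(\P^n_R)$ that is still surjective on $\pi_0$. Lifting each summand individually would amount to lifting a global section of $\pi_0(\sF)(-d_\alpha)$ along $\pi_0 R\Gamma(\P^n_R, \sF(-d_\alpha)) \to H^0(\P^n_{\pi_0(R)}, \pi_0(\sF)(-d_\alpha))$, with obstruction in $H^1(\P^n_R, \tau_{\ge 1}(\sF)(-d_\alpha))$, which need not vanish in general. To sidestep this, I would form instead the \emph{universal} map: the direct sum of all maps $\sO(-m) \to \sF$ classified by classes in $\pi_0 R\Gamma(\P^n_R, \sF(m))$, as $m$ ranges over $\bZ$. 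Surjectivity on $\pi_0$ can be checked on the affine cover $\{D_+(x_i)\}$, where it reduces to the derived counterpart of the classical ``clearing denominators'' identity: every element of $\pi_0(\sF|_{D_+(x_i)})$ equals the image, under the trivialization $\sO(-m)|_{D_+(x_i)} \simeq \sO_{D_+(x_i)}$ via $x_i^{-m}$, of some class in $\pi_0 R\Gamma(\P^n_R, \sF(m))$ for $m$ sufficiently large.

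This last step -- the hardest -- follows from the description of $\Qcoh(\P^n_R)$ in terms of $\bZ$-graded modules over the graded derived symmetric algebra $\Sym_R(R^{n+1})$. Concretely, under this equivalence $\sF(D_+(x_i))$ is identified with the degree-zero piece of the $x_i$-localization of $\bigoplus_m R\Gamma(\P^n_R, \sF(m))$, and since $\pi_0$ commutes with filtered colimits it also commutes with the localization at the homogeneous element $x_i$. Combined with the classical generation statement from the first step (applied to the $\pi_0$-level graded module), this produces the needed factorizations $s = t|_{D_+(x_i)}/x_i^m$ and so exhibits the universal map above as surjective on $\pi_0$.
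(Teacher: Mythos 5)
Your argument is essentially correct, but be aware that the paper does not prove this lemma at all: it is imported from Lurie (SAG, Lemmas~5.6.2.2 and 7.2.2.2), so there is no internal proof to compare against. Measured against that, your proposal is a reasonable self-contained argument, and its load-bearing step is the right one: the derived ``clearing denominators'' identification $\Gamma(D_+(x_i),\sF)\simeq\colim_m \Gamma(\P^n_R,\sF(m))$, with transition maps multiplication by $x_i$ and $\sO(m)|_{D_+(x_i)}$ trivialized by $x_i^m$; granting it, the universal map is surjective on $\pi_0$ because $\pi_0$ commutes with filtered colimits and surjectivity of a map of quasi-coherent sheaves on $\P^n_{\pi_0(R)}$ may be tested on the affine cover $\{D_+(x_i)\}$. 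Two remarks. First, your opening classical step and the discussion of the obstruction in $H^1$ are dispensable: once you have the colimit identification, every $s\in\pi_0\Gamma(D_+(x_i),\sF)$ is literally of the form $t/x_i^m$ for some $t\in\pi_0 R\Gamma(\P^n_R,\sF(m))$, and the summand of the universal map classified by $t$ already hits $s$, so no lifting of a previously constructed classical surjection is needed. Second, you justify the key identification by invoking a graded-module (derived Serre/Proj) description of $\Qcoh(\P^n_R)$, which is true but heavier than necessary and not among the paper's quoted inputs; it suffices to observe that $j_i\colon D_+(x_i)\hook\P^n_R$ is an affine open immersion, that $(j_i)_*\sO_{D_+(x_i)}\simeq\colim_m\sO(m)$ with transition maps $x_i$ (checked on the standard affines, where everything is a filtered colimit of free modules), and that $\Gamma(\P^n_R,-)$ commutes with filtered colimits since $\P^n_R$ is covered by finitely many affines; the projection formula for the affine morphism $j_i$ then gives $\Gamma(D_+(x_i),\sF)\simeq\Gamma(\P^n_R,\sF\otimes (j_i)_*\sO)\simeq\colim_m\Gamma(\P^n_R,\sF(m))$. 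With that substitution your proof closes up cleanly and proves exactly the cited statement.
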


\begin{proof}[Proof of \thmref{thm:Qcoh(P(E))}]
Since the functors $-\otimes\sO(k)$ are equivalences, it will suffice to take $k=0$ in both claims.
For claim (i) we want to show that the unit map $\sF \to q_*q^*(\sF)$ is invertible for all $\sF \in \Qcoh(X)$.
By fpqc descent and base change (\sssecref{sssec:Qcoh functoriality}), we may reduce to the case where $X = \Spec(R)$ is affine and $\sE = \sO^{n+1}_S$ is free.
Now both functors $q^*$ and $q_*$ are exact and moreover commute with arbitrary colimits (the latter by \sssecref{sssec:Qcoh functoriality} since $q$ is quasi-compact and schematic), and $\Qcoh(X) \simeq \Mod_R$ is generated by $\sO_X$ under colimits and finite limits.
Therefore we may assume $\sF = \sO_X$, in which case the claim holds by \propref{prop:Serre}.

For claim (ii), let us first check the orthogonality condition in \defnref{defn:SOD}.
Thus take $\sF,\sG\in \Qcoh(X)$ and consider the mapping space
  \begin{equation*}
    \Maps(q^*(\sF), q^*(\sG) \otimes \sO(-m)) \simeq \Maps(\sF, q_*(\sO(-m)) \otimes \sG)
  \end{equation*}
for $1\le m\le n$, where the identification results from the projection formula.
Since $q_*(\sO(-m)) \simeq 0$ by \propref{prop:Serre}, this space is contractible.

It now remains to show that every $\sF \in \Qcoh(\P(\sE))$ belongs to the full subcategory $\langle \bC(0),\ldots,\bC(-n) \rangle \subseteq \Qcoh(\P(\sE))$ generated under finite colimits and limits by the subcategories $\bC(0),\ldots,\bC(-n)$.
Set $\sG_{-1} = \sF \otimes \sO(-1)$ and define $\sG_m$, for $m\ge 0$, so that we have exact triangles
  \begin{equation}\label{eq:G_m+1}
    q^*q_*(\sG_{m-1} \otimes \sO(1)) \xrightarrow{\mrm{counit}} \sG_{m-1} \otimes \sO(1) \to \sG_{m}.
  \end{equation}
For each $m\ge -1$, we claim that $\sG_m$ is right orthogonal to each of the subcategories $\bC(0),\bC(1),\ldots,\bC(m)$.
For $m=-1$ the claim is vacuous, so take $m\ge 0$ and assume by induction that it holds for $m-1$.
Since $q^*q_*(\sG_{m-1} \otimes \sO(1))$ is contained in $\bC(0)$, it follows that $\sG_{m}$ is right orthogonal to $\bC(0)$.
To show that $\sG_m$ is right orthogonal to $\bC(i)$, for $1\le i\le m$, it will suffice to show that the left-hand and middle terms of the exact triangle \eqref{eq:G_m+1} are both right orthogonal to $\bC(i)$.
For the left-hand term this follows from the inclusion $\bC(0) \subset \bC(i)^\perp$, demonstrated above.
For the middle term $\sG_{m-1} \otimes \sO(1)$, the claim follows by the induction hypothesis.

Now we claim that $\sG_n$ is zero.
Using fpqc descent again, we may assume that $X = \Spec(R)$ and $\sE = \sO^{\oplus n+1}_X$ is free (since the sequence $(\sG_{-1},\sG_0,\ldots,\sG_n)$ is stable under base change).
Using \lemref{lem:surjection from O(m)'s} we can build a map
  \begin{equation*}
    \varphi : \bigoplus_\alpha \sO(m_\alpha)[k_\alpha] \to \sG_n
  \end{equation*}
which is surjective on all homotopy groups.
From \lemref{lem:O(n+1)} it follows that $\sG_n$ is right orthogonal to all $\bC(i)$, $i\in\bZ$.
Thus $\varphi$ must be null-homotopic, so $\sG_n \simeq 0$ as claimed.
Working backwards, we deduce that $\sG_{n-1} \in \bC(-1)$, ..., $\sG_0 \in \langle \bC(-1),\ldots,\bC(-n)\rangle$, and then finally that $\sF \in \langle \bC(0), \bC(-1),\ldots,\bC(-n)\rangle$ as claimed.
\end{proof}

\ssec{Proof of \thmref{thm:Perf(P(E))}}
\label{ssec:proof of Perf(P(E))}

We now deduce \thmref{thm:Perf(P(E))} from \thmref{thm:Qcoh(P(E))}.
First note that the fully faithful functor $\sF \mapsto q^*(\sF)\otimes\sO(k)$ of \thmref{thm:Qcoh(P(E))}\itemref{item:Qcoh(P(E))/fully faithful} restricts to a fully faithful functor $\Perf(X) \to \Perf(\P(\sE))$, since $q^*$ preserves perfect complexes.
This shows \thmref{thm:Perf(P(E))}\itemref{item:Perf(P(E))/fully faithful}.

For part \itemref{item:Perf(P(E))/SOD} we argue again as in the proof of \thmref{thm:Qcoh(P(E))}.
The point is that if $\sF \in \Qcoh(\P(\sE))$ is perfect, then so is each $\sG_m \in \Qcoh(\P(\sE))$, since $q^*$ and $q_*$ preserve perfect complexes (the latter because $q$ is smooth and proper).

\ssec{Projective bundle formula}

From \thmref{thm:Perf(P(E))} and \lemref{lem:additive} we deduce:

\begin{cor}\label{cor:E(P(E))}
Let $X$ be a derived Artin stack, $\sE$ a locally free $\sO_X$-module of rank $n+1$, $n\ge 0$, and $q : \P(\sE) \to X$ the associated projective bundle.
Then for any additive invariant $E$, there is a canonical isomorphism
  \begin{equation*}
    E(\P(\sE)) \simeq \bigoplus_{k=0}^{n} E(X)
  \end{equation*}
induced by the functors $q^*(-)\otimes \sO(-k) : \Perf(X) \to \Perf(\P(\sE))$.
\end{cor}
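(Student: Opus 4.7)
The plan is to deduce the corollary directly from the combination of Theorem~\ref{thm:Perf(P(E))} and Lemma~\ref{lem:additive}. There is essentially no new work to do: the heavy lifting is the semi-orthogonal decomposition of $\Perf(\P(\sE))$, and the corollary is a formal consequence once that is in hand.

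Concretely, I would proceed as follows. First, invoke Theorem~\ref{thm:Perf(P(E))}: the sequence $(\bA(0), \bA(-1), \ldots, \bA(-n))$ of full stable subcategories of $\Perf(\P(\sE))$ is a semi-orthogonal decomposition, with each $\bA(-k)$ the essential image of the fully faithful exact functor
\[
  q^*(-)\otimes \sO(-k) : \Perf(X) \too \Perf(\P(\sE)).
\]
Second, apply Lemma~\ref{lem:additive} to this decomposition: for any additive invariant $E$ of stable \inftyCats, we obtain a canonical isomorphism
\[
  E(\P(\sE)) \;=\; E(\Perf(\P(\sE))) \;\simeq\; \bigoplus_{k=0}^{n} E(\bA(-k)).
\]
Third, since each functor $\Perf(X) \to \bA(-k)$ is an equivalence by virtue of being fully faithful and essentially surjective onto $\bA(-k)$, it induces an isomorphism $E(X) = E(\Perf(X)) \isoto E(\bA(-k))$. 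Substituting yields the asserted decomposition, together with the identification of the induced map on each summand as the one coming from $q^*(-) \otimes \sO(-k)$.

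There is no serious obstacle here, since the assertion is purely formal given the inputs. The only point to be careful about is bookkeeping: one should note that the isomorphism $E(\bA(-k)) \simeq E(X)$ is induced by the functor $q^*(-) \otimes \sO(-k)$ itself (not just by an abstract equivalence), so that the resulting identification is canonical in the sense stated.
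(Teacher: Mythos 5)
Your proposal is correct and coincides with the paper's argument: the corollary is stated there as an immediate consequence of Theorem~\ref{thm:Perf(P(E))} together with Lemma~\ref{lem:additive}, exactly as you describe. Your extra remark that each $E(\bA(-k))\simeq E(X)$ is induced by the fully faithful functor $q^*(-)\otimes\sO(-k)$ itself is the right bookkeeping point and is consistent with how the paper phrases the statement.
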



\section{The blow-up formula}
\label{sec:blowup}

\ssec{Virtual Cartier divisors}

Recall from \cite{KhanBlowup} that a \emph{virtual (effective) Cartier divisor} on a derived Artin stack $X$ is a quasi-smooth closed immersion $i : D \to X$ of virtual codimension $1$.
For any such $i : D \to X$, there is a canonical exact triangle
  \begin{equation*}
    \sO_X(-D) \to \sO_X \to i_*(\sO_D),
  \end{equation*}
where $\sO_X(-D)$ is a locally free sheaf of rank $1$, equipped with a canonical isomorphism $i^*(\sO_X(-D)) \simeq \sN_{D/X}$ (see 3.2.3 and 3.2.9 in \cite{KhanBlowup}).

\begin{lem}\label{lem:i^*i_*(O_D)}
Let $X$ be a derived Artin stack and $i : D \to X$ a virtual Cartier divisor.
Then there is a canonical isomorphism
  \begin{equation*}
    i^*i_*(\sO_D) \simeq \sO_D \oplus \sN_{D/X}[1].
  \end{equation*}
\end{lem}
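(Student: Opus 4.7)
The plan is to obtain the decomposition directly by splitting the canonical exact triangle obtained from the fundamental exact triangle of a virtual Cartier divisor.

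First, I apply $i^*$ to the exact triangle
\[
  \sO_X(-D) \to \sO_X \to i_*(\sO_D)
\]
recalled just before the statement. Using the canonical identifications $i^*\sO_X \simeq \sO_D$ and $i^*\sO_X(-D) \simeq \sN_{D/X}$, this yields an exact triangle
\[
  \sN_{D/X} \xrightarrow{\alpha} \sO_D \xrightarrow{\beta} i^*i_*(\sO_D)
\]
in $\Qcoh(D)$. To finish, it suffices to show this triangle is split, i.e. that the map $\beta$ admits a retraction; then $\alpha = 0$ forcibly, and $i^*i_*(\sO_D)$ is the cofibre of the zero map $\sN_{D/X} \to \sO_D$, which is $\sO_D \oplus \sN_{D/X}[1]$.

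The key observation is that $\beta$ can be identified with $i^*$ applied to the unit $\eta : \sO_X \to i_*i^*\sO_X = i_*\sO_D$ of the adjunction $(i^*, i_*)$: indeed the map $\sO_X \to i_*\sO_D$ in the fundamental triangle is precisely this unit. Consequently the triangle identity for the adjunction gives that the composite
\[
  \sO_D \xrightarrow{\beta} i^*i_*(\sO_D) \xrightarrow{\varepsilon} \sO_D,
\]
where $\varepsilon$ is the counit, is the identity on $\sO_D$. This exhibits $\varepsilon$ as the desired retraction of $\beta$.

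The argument above is essentially formal once the map $\sO_D \to i^*i_*(\sO_D)$ appearing in the triangle is identified with $i^*(\eta)$; this identification is the only step requiring attention, and it is built into the construction of the fundamental triangle for $\sO_X(-D)$. There is no real obstacle beyond bookkeeping of the adjunction structure. Note that canonicity of the splitting follows from the canonicity of the counit $\varepsilon$, so the asserted isomorphism is natural in $(X,D)$.
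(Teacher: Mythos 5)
Your argument is correct and is essentially identical to the paper's proof: apply $i^*$ to the triangle $\sO_X(-D) \to \sO_X \to i_*(\sO_D)$, identify the resulting map $\sO_D \to i^*i_*(\sO_D)$ with $i^*(\eta)$, and split it via the counit using the triangle identity. The only cosmetic difference is that the paper rotates the triangle first, while you deduce the splitting by showing the connecting map $\sN_{D/X}\to\sO_D$ is null-homotopic; both are the same formal adjunction argument.
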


\begin{proof}
Applying $i^*$ to the exact triangle above (and rotating), we get the exact triangle
  \begin{equation*}
    \sO_D \to i^*i_*(\sO_D) \to \sN_{D/X}[1].
  \end{equation*}
The map $\sO_D \to i^*i_*(\sO_D)$ is induced by the natural transformation $i^*(\eta) : i^* \to i^*i_*i^*$ (where $\eta$ is the adjunction unit), so by the triangle identities it has a retraction given by the co-unit map $i^*i_*(\sO_D) \to \sO_D$.
In other words, the triangle splits.
\end{proof}

\ssec{Grothendieck duality}

Let $i : Z \to X$ be a quasi-smooth closed immersion of derived Artin stacks.
The functor $i_*$ admits a right adjoint $i^!$, which for formal reasons can be computed by the formula
  \begin{equation*}
    i^!(-) \simeq i^*(-) \otimes \omega_{D/X},
  \end{equation*}
where $\omega_{D/X} := i^!(\sO_X)$ is called the \emph{relative dualizing sheaf}.
See \cite[Cor.~6.4.2.7]{SAG-20180204}.
When $i$ is a virtual Cartier divisor, $\omega_{D/X}$ can be computed as follows:

\begin{prop}[Grothendieck duality]\label{prop:omega_D/X}
Let $X$ be a derived Artin stack.
Then for any virtual Cartier divisor $i : D \to X$, there is a canonical isomorphism
  \begin{equation*}
    \sN_{D/X}^\vee[-1] \isoto \omega_{D/X}
  \end{equation*}
of perfect complexes on $D$.
In particular, there is a canonical identification $i^! \simeq i^*(-) \otimes \sN_{D/X}^\vee[-1]$.
\end{prop}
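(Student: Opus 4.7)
The strategy is to construct $\phi$ by adjunction and to exhibit it as the canonical comparison between two presentations of the same homotopy fiber in $\Qcoh(X)$.

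Starting from the Cartier exact triangle $\sO_X(-D)\to\sO_X\to i_*\sO_D$, I would perform two parallel operations. First, tensoring with $\sO_X(D)$ and invoking the projection formula (with $i^*\sO_X(D)\simeq\sN_{D/X}^\vee$) gives
\[
\sO_X\xrightarrow{s(D)}\sO_X(D)\to i_*(\sN_{D/X}^\vee),
\]
so that after rotation, $i_*(\sN_{D/X}^\vee[-1])$ sits canonically as $\Fib(s(D))$ via a morphism $\partial: i_*(\sN_{D/X}^\vee[-1])\to\sO_X$. Second, applying $\uHom_X(-,\sO_X)$ to the Cartier triangle and using the $(i_*,i^!)$-adjunction to identify $\uHom_X(i_*\sO_D,\sO_X)\simeq i_*\omega_{D/X}$ exhibits $i_*\omega_{D/X}$ as $\Fib(s(D))$ via the counit $\epsilon: i_*\omega_{D/X}\to\sO_X$. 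Uniqueness of fibers then produces a canonical equivalence $\Phi:i_*(\sN_{D/X}^\vee[-1])\isoto i_*\omega_{D/X}$ compatible with the maps to $\sO_X$.

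To produce $\phi$ itself, I would apply the $(i_*,i^!)$-adjunction to $\partial$, obtaining $\phi:\sN_{D/X}^\vee[-1]\to i^!\sO_X=\omega_{D/X}$ characterized by $\epsilon\circ i_*(\phi)=\partial$. The compatibility of the two fiber-sequence presentations forces $i_*(\phi)$ to agree with $\Phi$, which is an equivalence; since $i_*$ is conservative for closed immersions, $\phi$ itself is then an equivalence.

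The main obstacle is making the identification $i_*(\phi)=\Phi$ coherent in the $\infty$-categorical setting: one must verify that the null-homotopy $s(D)\circ\partial\simeq 0$ used to define $\Phi$ corresponds, under the adjunction triangle identities, to the analogous null-homotopy $s(D)\circ\epsilon\simeq 0$. Both arise functorially from the same Cartier triangle via dual natural operations, so this compatibility should fall out after unwinding definitions. Should this homotopical bookkeeping prove too delicate, an alternative route is to invoke fpqc descent, using naturality of the construction together with the base-change compatibility of $i^!$ for quasi-smooth closed immersions, to reduce to the universal model $j:\{0\}\hookrightarrow\A^1$. There a direct computation via the Koszul resolution $\sO_{\A^1}\xrightarrow{t}\sO_{\A^1}$ identifies both $\sN_{\{0\}/\A^1}^\vee[-1]$ and $\omega_{\{0\}/\A^1}$ with $\sO_{\{0\}}[-1]$, making $\phi$ manifestly the identity.
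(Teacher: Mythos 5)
Your proposal is correct and follows essentially the same route as the paper: both dualize the Cartier triangle $\sO_X(-D)\to\sO_X\to i_*(\sO_D)$ (you via $\uHom_X(-,\sO_X)$ and the sheafy $(i_*,i^!)$-adjunction, the paper by passing to right adjoints in the triangle of functors $\id\otimes\sO_X(-D)\to\id\to i_*i^*$), construct the comparison map by adjunction, exhibit $i_*$ of both sides as the fibre of $\sO_X\to\sO_X(D)$, and conclude by conservativity of $i_*$. The coherence point you flag is precisely the step the paper also leaves implicit (``comparing'' the two triangles), and your fallback reduction to the universal case $\{0\}\hookrightarrow\A^1$ via fpqc descent and base change for $i^!$ is the same device the paper uses to reduce to the affine case.
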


\begin{proof}
Write $\sL := \sO_X(-D)$ and consider again the exact triangle $\sL \to \sO_X \to i_*(\sO_D)$.
By the projection formula, this can be refined to an exact triangle of natural transformations $\id\otimes\sL \to \id \to i_*i^*$, or, passing to right adjoints, an exact triangle $i_*i^! \to \id \to \id \otimes \sL^\vee$.
In particular we get the exact triangle
  \begin{equation}\label{eq:divisor sequence dual}
    i_*i^!(\sO_X) \to \sO_X \to \sL^\vee.
  \end{equation}
The associated map $\sL^\vee[-1] \to i_*i^!(\sO_X)$ gives by adjunction a canonical morphism
  \begin{equation*}
    \sN_{D/X}^\vee[-1] \simeq i^*(\sL^\vee)[-1] \to i^!(\sO_X),
  \end{equation*}
which we claim is invertible.
By fpqc descent and the fact that $i^!$ commutes with the operation $f^*$, for any morphism $f$ \cite[Prop.~6.4.2.1]{SAG-20180204}, we may assume that $X$ is affine.
In this case the functor $i_*$ is conservative, so it will suffice to show that the canonical map
  \begin{equation*}
    i_*(\sN_{D/X}^\vee)[-1] \to i_*i^!(\sO_X)
  \end{equation*}
is invertible.
Considering again the triangle $\sF \otimes \sL \to \sF \to i_*i^*(\sF)$ above and taking $\sF = \sL^\vee$, we get the exact triangle
  \begin{equation*}
    \sO_X \to \sL^\vee \to i_*i^*(\sL^\vee) \simeq i_*(\sN_{D/X}^\vee),
  \end{equation*}
since $\sL$ is invertible.
Comparing with \eqref{eq:divisor sequence dual} yields the claim.
\end{proof}

\ssec{Semi-orthogonal decomposition on \texorpdfstring{$\Qcoh(\Bl_{Z/X})$}{Qcoh(Bl)}}

In this subsection we prove:

\begin{thm}\label{thm:Qcoh(Bl)}
Let $X$ be a derived Artin stack and $i : Z \to X$ a quasi-smooth closed immersion of virtual codimension $n\ge 1$.
Let $\widetilde{X} = \Bl_{Z/X}$ and consider the quasi-smooth blow-up square \eqref{eq:qsbl square}
  \begin{equation*}
    \begin{tikzcd}
      D \ar{r}{i_D}\ar{d}{q}
        & \widetilde{X} \ar{d}{p}
      \\
      Z \ar{r}{i}
        & X
    \end{tikzcd}
  \end{equation*}
Then we have:
\begin{thmlist}
\item\label{item:Qcoh(Bl)/fully faithful}
The functor $p^* : \Qcoh(X) \to \Qcoh(\widetilde{X})$ is fully faithful.
We denote its essential image by $\bD(0) \subset \Qcoh(\widetilde{X})$.

\item\label{item:Qcoh(Bl)/(i_D)_*q^* fully faithful}
The functor $(i_D)_*(q^*(-) \otimes \sO(-k)) : \Qcoh(Z) \to \Qcoh(\widetilde{X})$ is fully faithful, for each $1\le k\le n-1$.
We denote its essential image by $\bD(-k) \subset \Qcoh(\widetilde{X})$.

\item\label{item:Qcoh(Bl)/orthogonal}
For each $1\le k\le n-1$, the full stable subcategory $\bD(-k) \subset \Qcoh(\widetilde{X})$ is right orthogonal to each of $\bD(0),\ldots,\bD(-k+1)$.

\item\label{item:Qcoh(Bl)/generates}
The stable \inftyCat $\Qcoh(\widetilde{X})$ is generated by the full subcategories $\bD(0)$, $\bD(-1)$, \ldots, $\bD(-n+1)$ under finite colimits and finite limits.
In particular, the sequence $(\bD(0), \bD(-1), \ldots, \bD(-n+1))$ forms a semi-orthogonal decomposition of $\Qcoh(\widetilde{X})$.
\end{thmlist}
\end{thm}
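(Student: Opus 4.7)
The plan is that parts (i)--(iii) fall out from formal manipulations with adjunctions, Grothendieck duality, and Serre vanishing, while the main obstacle is the generation statement (iv). Throughout I would use the identification $\sN_{D/\widetilde{X}}\simeq \sO_D(1)$ (so $i_D^*\sO_{\widetilde{X}}(-D)\simeq \sO_D(1)$), which follows from property (c) of the quasi-smooth blow-up square combined with the universal property of $D=\P(\sN_{Z/X})$. In particular, Grothendieck duality (\propref{prop:omega_D/X}) reads $i_D^!\simeq i_D^*(-)\otimes \sO_D(-1)[-1]$, and the extension of \lemref{lem:i^*i_*(O_D)} obtained by tensoring its defining triangle with $i_{D*}\sH$ and applying the projection formula reads
\[ i_D^*i_{D*}(\sH) \simeq \sH\oplus \sH\otimes\sO_D(1)[1] \qquad (\sH\in\Qcoh(D)). \]

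For (i), the projection formula reduces full faithfulness of $p^*$ to $p_*\sO_{\widetilde{X}}\simeq \sO_X$; by fpqc descent and base change this reduces to the universal blow-up $\Bl_{\{0\}/\A^n}$, realized as the total space of $\sO_{\P^{n-1}}(-1)$, where the vanishing follows from Serre. For part (ii) and the $\bD(-k)$/$\bD(-j)$ orthogonality in (iii), combining the two displayed identities yields
\[ i_D^!i_{D*}(q^*\sG\otimes\sO(-k)) \simeq q^*\sG\otimes\sO(-k)\,\oplus\, q^*\sG\otimes\sO(-k-1)[-1], \]
so that $\Maps_{\widetilde{X}}(i_{D*}(q^*\sF\otimes\sO(-j)),\,i_{D*}(q^*\sG\otimes\sO(-k)))$ becomes, after twisting and applying $q_*$, $\Maps_Z(\sF,\sG\otimes q_*\sO(j-k))\oplus \Maps_Z(\sF,\sG\otimes q_*\sO(j-k-1)[-1])$. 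For $j=k$ this collapses to $\Maps_Z(\sF,\sG)$ by \propref{prop:Serre}, proving (ii); for $1\le j<k\le n-1$ both summands vanish by Serre, proving the orthogonality. The remaining orthogonality $\bD(-k)\perp \bD(0)$ uses the base change $i_D^*p^*\simeq q^*i^*$ and the projection formula to reduce again to $q_*\sO(-k)\simeq 0$.

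For (iv) I would mimic the inductive argument in the proof of \thmref{thm:Qcoh(P(E))}. Given $\sF\in\Qcoh(\widetilde{X})$, set $\sG_{-1}=\sF$ and, for $0\le m\le n-1$, define $\sG_m$ as the cofiber of a canonical ``residual'' map $X_m\to \sG_{m-1}$ with $X_m\in\bD(-m)$: for $m=0$, take the counit $p^*p_*\sG_{-1}\to\sG_{-1}$; for $1\le m\le n-1$, take the counit $i_{D*}(q^*q_*(i_D^!\sG_{m-1}\otimes\sO(m))\otimes\sO(-m))\to\sG_{m-1}$ of the composite adjunction $F_m:=i_{D*}(q^*(-)\otimes\sO(-m))\dashv R_m:=q_*(i_D^!(-)\otimes\sO(m))$. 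Since part (ii) asserts $R_mF_m\simeq\id$, the triangle identities force $R_m$ to carry this counit to an equivalence; combined with the orthogonality in (iii), an induction on $m$ shows that $\sG_m$ is right orthogonal to each of $\bD(0),\ldots,\bD(-m)$.

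It remains to show $\sG_{n-1}\simeq 0$, which is the main obstacle. The construction of the $\sG_m$ is compatible with arbitrary base change along $X'\to X$, so by fpqc descent this reduces to the universal case $(X,Z)=(\A^n,\{0\})$, where $\Bl_{\{0\}/\A^n}$ is the total space of the tautological line bundle $\sO_{\P^{n-1}}(-1)$. I would prove the vanishing there by establishing an analogue of \lemref{lem:O(n+1)} (a Koszul-type presentation of $\sO_{\Bl_{\{0\}/\A^n}}$ in terms of line bundles pulled back from the projective bundle) together with an analogue of \lemref{lem:surjection from O(m)'s} (producing enough surjections from direct sums of such twisted line bundles onto arbitrary connective quasi-coherent sheaves), from which the vanishing follows exactly as in the proof of \thmref{thm:Qcoh(P(E))}. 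Constructing such a resolution, which must simultaneously accommodate the projective direction and the line-bundle fibre direction of the universal blow-up, is the most delicate step of the plan.
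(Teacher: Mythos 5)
Parts \itemref{item:Qcoh(Bl)/fully faithful}--\itemref{item:Qcoh(Bl)/orthogonal} of your plan, and the inductive tower $\sG_0,\ldots,\sG_{n-1}$ in \itemref{item:Qcoh(Bl)/generates}, match the paper's argument in substance: the same inputs (\propref{prop:Serre}, \propref{prop:omega_D/X}, \lemref{lem:i^*i_*(O_D)}, the identification $\sN_{D/\widetilde{X}}\simeq\sO_D(1)$) are used, only repackaged through the composite adjunction rather than by reducing to $\sF=\sO_Z$ after an fpqc reduction to affine $X$.

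The gap is in the last step, the vanishing of $\sG_{n-1}$. Your reduction ``by fpqc descent to the universal case $(X,Z)=(\A^n,\{0\})$'' is not valid. Descent plus base-change compatibility of the $\sG_m$ lets you pass along a flat (smooth or Zariski) cover, hence to $X$ affine with $Z$ the derived zero-locus of a map $f:X\to\A^n$; but $f$ is not flat, and an arbitrary $\sF\in\Qcoh(\Bl_{Z/X})$ is not pulled back from $\Bl_{\{0\}/\A^n}$, so no descent or base-change argument reduces the vanishing statement for all $\sF$ over a general affine $X$ to the universal blow-up. (Contrast with part \itemref{item:Qcoh(Bl)/fully faithful}, where the single object $\sO_{\widetilde{X}}$ \emph{is} pulled back from the universal square, so base change for $p_*$ does apply; that is precisely why the paper can quote the classical computation there but cannot argue the same way in \itemref{item:Qcoh(Bl)/generates}.) Moreover, even in the universal case you defer the essential input --- analogues of \lemref{lem:O(n+1)} and \lemref{lem:surjection from O(m)'s} on the blow-up itself --- as ``the most delicate step,'' i.e.\ it is left unproven.

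The paper's device avoids needing any new resolution lemma: over an affine $X$ the blow-up admits a quasi-smooth closed immersion $i':\widetilde{X}\to\P^{n-1}_X$ over $X$ (from the construction of the blow-up in \cite{KhanBlowup}). Applying $(i')^*$ to the colimit relation of \lemref{lem:O(n+1)} shows that every twist $\sO_{\widetilde{X}}(k)$, $k\in\bZ$, lies in the subcategory $\bD$ generated by $\bD(0),\ldots,\bD(-n+1)$; and applying \lemref{lem:surjection from O(m)'s} to $i'_*(\sG_{n-1})$ on $\P^{n-1}_X$ and adjointing over produces a map $\bigoplus_\alpha\sO_{\widetilde{X}}(d_\alpha)[n_\alpha]\to\sG_{n-1}$ surjective on homotopy groups, which must be null because the source lies in $\bD$ and $\sG_{n-1}$ is right orthogonal to $\bD$; hence $\sG_{n-1}\simeq 0$. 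To salvage your route you would have to prove, over an arbitrary affine base $X$ (not just $\A^n$), that the twists $\sO_{\widetilde{X}}(k)$ generate $\Qcoh(\Bl_{Z/X})$ in the appropriate sense --- which is exactly the content that the embedding into $\P^{n-1}_X$ supplies for free.
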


\sssec{Proof of \itemref{item:Qcoh(Bl)/fully faithful}}

The claim is that for any $\sF \in \Qcoh(X)$, the unit map $\sF \to p_*p^*(\sF)$ is invertible.
By fpqc descent we may reduce to the case where $X$ is affine and $i$ fits in a cartesian square of the form \eqref{eq:quasi-smooth}.
Since $\Qcoh(X)$ is then generated under colimits and finite limits by $\sO_X$, and $p_*$ commutes with colimits since $p$ is quasi-compact and schematic (\sssecref{sssec:Qcoh functoriality}), we may assume that $\sF = \sO_X$.
In other words, it suffices to show that the canonical map $\sO_X \to p_*(\sO_{\widetilde{X}})$ is invertible.
  \begin{equation*}
    \begin{tikzcd}
      D \ar{r}{i_D}\ar{d}{q}
        & \widetilde{X}\ar{d}{p}
      \\
      Z \ar{r}{i}
        & X
    \end{tikzcd}
    \qquad
    \begin{tikzcd}
      \P^{n-1} \ar{r}\ar{d}
        & \Bl_{\{0\}/\A^n}\ar{d}{p_0}
      \\
      \{0\} \ar{r}{i_0}
        & \A^n,
    \end{tikzcd}
  \end{equation*}
Since the left-hand square is the (derived) base change of the right-hand square along the morphism $f : X \to \A^n$, it follows that the map $\sO_X \to p_*(\sO_{\widetilde{X}})$ is the inverse image of the canonical map $\sO_{\A^n} \to (p_0)_*(\sO_{\Bl_{\{0\}/\A^n}})$.
Thus we reduce to the case where $i$ is the immersion $\{0\} \to \A^n$.
This is well-known, see \cite[Exp.~VII]{SGA6}.

\sssec{Proof of \itemref{item:Qcoh(Bl)/(i_D)_*q^* fully faithful}}

It suffices to show the unit map $\sF \to q_*(i_D)^!(i_D)_*q^*(\sF)$ is invertible for all $\sF \in \Qcoh(Z)$.
As in the previous claim we may assume $X$ is affine and that $\sF = \sO_Z$.
Using \propref{prop:omega_D/X}, the canonical identification $\sN_{D/\widetilde{X}} \simeq \sO_D(1)$, and \lemref{lem:i^*i_*(O_D)}, the unit map is identified with
  \begin{equation*}
    \sO_Z
      \to q_*((i_D)^*(i_D)_*(\sO_D) \otimes \sO_D(-1))[-1]
      \simeq q_*(\sO_D(-1)) \oplus q_*(\sO_D).
  \end{equation*}
Since $q : D \to Z$ is the projection of the projective bundle $\P(\sN_{Z/X})$, it follows from \propref{prop:Serre} that we have identifications $q_*(\sO_D(-1)) \simeq 0$ and $q_*(\sO_D) \simeq \sO_Z$, under which the map in question is the identity.

\sssec{Proof of \itemref{item:Qcoh(Bl)/orthogonal}}

To see that $\bD(-k)$ is right orthogonal to $\bD(0)$, observe that by \thmref{thm:Qcoh(P(E))}, the mapping space
  \begin{equation*}
    \Maps(p^*(\sF_X), (i_D)_*(q^*(\sF_Z) \otimes \sO(-k)))
      \simeq \Maps(q^*i^*(\sF_X), q^*(\sF_Z) \otimes \sO(-k))
  \end{equation*}
is contractible for every $\sF_X \in \Qcoh(X)$ and $\sF_Z \in \Qcoh(Z)$.

To see that $\bD(-k)$ is right orthogonal to $\bD(-k')$, for $1\le k'< k$, consider the mapping space
  \begin{equation*}
    \Maps((i_D)_*(q^*(\sF_Z) \otimes \sO(-k')), (i_D)_*(q^*(\sF'_Z) \otimes \sO(-k))),
  \end{equation*}
for $\sF_Z,\sF'_Z \in \Qcoh(Z)$.
Using fpqc descent and base change for $(i_D)_*$ against $f^*$ for any morphism $f : U \to \widetilde{X}$, we may reduce to the case where $X$ is affine.
Since $\Qcoh(Z)$ is then generated under colimits and finite limits by $\sO_Z$, we may assume that $\sF_Z = \sF'_Z = \sO_Z$.
Then we have
  \begin{align*}
    \Maps((i_D)_*(\sO(-k')), (i_D)_*(\sO(-k)))
      &\simeq \Maps((i_D)^*(i_D)_*(\sO(-k')), \sO(-k))\\
      &\simeq \Maps(\sO(-k') \oplus \sO(-k'+1)[1], \sO(-k))
  \end{align*}
by \lemref{lem:i^*i_*(O_D)} and the projection formula, and this space is contractible by \thmref{thm:Qcoh(P(E))}.

\sssec{Proof of \itemref{item:Qcoh(Bl)/generates}}

Denote by $\bD$ the full subcategory of $\Qcoh(\widetilde{X})$ generated by $\bD(0)$, $\bD(-1)$, \ldots, $\bD(-n+1)$ under finite colimits and finite limits.
The claim is that the inclusion $\bD \subseteq \Qcoh(\widetilde{X})$ is an equality.
Note that $\sO_{\widetilde{X}} \in \bD(0) \subset \bD$ and $(i_D)_*(\sO_D(-k)) \in \bD(-k) \subset \bD$ for $1\le k\le n-1$.
Consider the exact triangle $\sO_{\widetilde{X}}(-D) \to \sO_{\widetilde{X}} \to (i_D)_*(\sO_D)$ and recall that $\sO_{\widetilde{X}}(-D) \simeq \sO_{\widetilde{X}}(1)$.
Tensoring with $\sO(-k)$ and using the projection formula, we get the exact triangle
  \begin{equation*}
    \sO_{\widetilde{X}}(-k+1) \to \sO_{\widetilde{X}}(-k) \to (i_D)_*(\sO_D(-k))
  \end{equation*}
for each $1\le k\le n-1$.
Taking $k=1$ we deduce that $\sO_{\widetilde{X}}(-1) \in \bD$.
Continuing recursively we find that $\sO_{\widetilde{X}}(-k) \in \bD$ for all $1\le k\le n-1$.

Now let $\sF \in \Qcoh(\widetilde{X})$.
Denote by $\sG_0 \in \Qcoh(\widetilde{X})$ the cofibre of the co-unit map $p^*p_*(\sF) \to \sF$.
Note that $\sG_0$ is right orthogonal to $\bD(0)$.
For $1\le m\le n-1$ define $\sG_m$ recursively by the exact triangles
  \begin{equation*}
    (i_D)_*(q^*q_*((i_D)^!(\sG_{m-1}) \otimes \sO(m)) \otimes \sO(-m))
      \xrightarrow{\mrm{counit}} \sG_{m-1} \to \sG_m.
  \end{equation*}
Just as in the proof of \thmref{thm:Qcoh(P(E))}, a simple induction argument shows that each $\sG_m$ is right orthogonal to all of the subcategories $\bD(0), \ldots, \bD(m-1)$.
We now claim that $\sG_{n-1}$ is zero; it will follow by recursion that $\sF$ belongs to $\bD$, as desired.

Since the objects $\sG_k$ are stable under base change, we may use fpqc descent and base change to assume that $X$ is affine.
Moreover we may assume that $i: Z\to X$ fits in a cartesian square of the form \eqref{eq:quasi-smooth}.
By \cite[3.3.6]{KhanBlowup}, $p : \widetilde{X} \to X$ factors through a quasi-smooth closed immersion $i' : \widetilde{X} \to \P^{n-1}_X$.
Recall from \lemref{lem:O(n+1)} that there is a canonical isomorphism $\colim_{J \subsetneq [n-1]} \sO(\abs{J}) \simeq \sO(n)$ in $\Qcoh(\P^{n-1}_X)$.
Applying $(i')^*$, we get $\colim_{J \subsetneq [n-1]} \sO_{\widetilde{X}}(\abs{J}) \simeq \sO_{\widetilde{X}}(n)$ in $\Qcoh(\widetilde{X})$.
In particular, every $\sO_{\widetilde{X}}(k)$ belongs to $\bD$ for all $k\in\bZ$.
Recall also that we may find a map $\bigoplus_\alpha \sO(d_\alpha)[n_\alpha] \to i'_*(\sG_{n-1})$ which is surjective on all homotopy groups (\lemref{lem:surjection from O(m)'s}).
By adjunction this corresponds to a map $\bigoplus_\alpha \sO(d_\alpha)[n_\alpha] \to \sG_{n-1}$ (which is also surjective on homotopy groups).
But the source belongs to $\bD$, and the target is right orthogonal to $\bD$, so this map is null-homotopic.
Thus $\sG_{n-1}$ is zero.

\ssec{Proof of \thmref{thm:Perf(Bl)}}
\label{ssec:proof of Perf(Bl)}

We now deduce \thmref{thm:Perf(Bl)} from \thmref{thm:Qcoh(Bl)}.
First note that the fully faithful functor $\sF \mapsto p^*(\sF)$ of \thmref{thm:Qcoh(Bl)}\itemref{item:Qcoh(P(E))/fully faithful} preserves perfect complexes and therefore restricts to a fully faithful functor $\Perf(X) \to \Perf(\Bl_{Z/X})$.
This shows \thmref{thm:Perf(Bl)}\itemref{item:Perf(Bl)/fully faithful 1}.

Similarly, part \itemref{item:Perf(Bl)/fully faithful 2} follows from the fact that the functors $q^*$ and $(i_D)_*$ preserve perfect complexes.
For the latter, this is because $i_D$ is quasi-smooth (and hence of finite presentation and of finite tor-amplitude).

For part \itemref{item:Perf(Bl)/SOD} we argue again as in the proof of \thmref{thm:Qcoh(Bl)}\itemref{item:Qcoh(Bl)/generates}.
The point is that if $\sF \in \Qcoh(\Bl_{Z/X})$ is perfect, then so is each $\sG_m \in \Qcoh(\P(\sE))$, since $q^*$, $q_*$, $(i_D)_*$ and $(i_D)^!$ all preserve perfect complexes.
For the latter this follows from \propref{prop:omega_D/X}.

\ssec{Blow-up formula}
\label{ssec:blowup/additive}

\sssec{}

By \thmref{thm:Perf(Bl)} and \lemref{lem:additive} we get:

\begin{cor}\label{cor:E(Bl_{Z/X})}
Let $X$ be a derived Artin stack and $i : Z \to X$ a quasi-smooth closed immersion of virtual codimension $n\ge 1$.
Then for any additive invariant $E$, there is a canonical isomorphism
  \begin{equation*}
    E(\Bl_{Z/X}) \simeq E(X) \oplus \bigoplus_{k=1}^{n-1} E(Z).
  \end{equation*}
\end{cor}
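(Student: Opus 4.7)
The plan is to simply combine the two ingredients that have been set up in the immediately preceding material: the semi-orthogonal decomposition of \thmref{thm:Perf(Bl)} and the abstract additivity result of \lemref{lem:additive}.

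First I would invoke \thmref{thm:Perf(Bl)}, which asserts that $\Perf(\Bl_{Z/X})$ admits a semi-orthogonal decomposition $(\bB(0), \bB(-1), \ldots, \bB(-n+1))$. By parts \itemref{item:Perf(Bl)/fully faithful 1} and \itemref{item:Perf(Bl)/fully faithful 2} of that theorem, the inclusion $\bB(0) \hook \Perf(\Bl_{Z/X})$ is equivalent to the fully faithful functor $p^* : \Perf(X) \to \Perf(\Bl_{Z/X})$, so that $\bB(0) \simeq \Perf(X)$ as stable \inftyCats; similarly each $\bB(-k) \simeq \Perf(Z)$ for $1 \le k \le n-1$, via the fully faithful functor $\sF \mapsto (i_D)_*(q^*(\sF) \otimes \sO(-k))$.

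Next I would apply \lemref{lem:additive} directly to this semi-orthogonal decomposition, which yields
\begin{equation*}
  E(\Perf(\Bl_{Z/X})) \simeq \bigoplus_{m=0}^{n-1} E(\bB(-m)).
\end{equation*}
Substituting the equivalences $\bB(0) \simeq \Perf(X)$ and $\bB(-k) \simeq \Perf(Z)$ (for $1 \le k \le n-1$) identified above gives the stated formula
\begin{equation*}
  E(\Bl_{Z/X}) \simeq E(X) \oplus \bigoplus_{k=1}^{n-1} E(Z),
\end{equation*}
where we adopt the standard convention $E(Y) := E(\Perf(Y))$ for a derived Artin stack $Y$.

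There is essentially no obstacle here: all the substantive work has been absorbed into the proofs of \thmref{thm:Perf(Bl)} (constructing the decomposition and verifying that its pieces remain inside $\Perf$) and \lemref{lem:additive} (the iterated application of \lemref{lem:SOD filtration} together with the defining additivity property of $E$ for split short exact sequences). The corollary is thus a one-line deduction, and the only thing to be slightly careful about is to confirm that the component functors of the decomposition used to identify each $\bB(-k)$ with $\Perf(Z)$ or $\Perf(X)$ are precisely the fully faithful exact functors already recorded in \thmref{thm:Perf(Bl)}, so that the equivalences on the level of $E$ are indeed canonical.
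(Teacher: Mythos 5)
Your proposal is correct and matches the paper's own argument exactly: the corollary is deduced by applying \lemref{lem:additive} to the semi-orthogonal decomposition of $\Perf(\Bl_{Z/X})$ given by \thmref{thm:Perf(Bl)}, identifying $\bB(0) \simeq \Perf(X)$ and each $\bB(-k) \simeq \Perf(Z)$ via the stated fully faithful functors. Nothing further is needed.
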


\sssec{Proof of \thmref{thm:blow-up descent}}
\label{sssec:proof of blow-up descent}

Combine Corollaries~\ref{cor:E(Bl_{Z/X})} and \ref{cor:E(P(E))} (with $\sE = \sN_{Z/X}$).


\section{The cdh topology}
\label{sec:cdh}

\ssec{The cdh topology}

The following notion was introduced by Voevodsky \cite{VoevodskyCdh} for noetherian schemes:

\begin{defn}\label{defn:cdh square}\leavevmode
Suppose given a cartesian square $Q$ of \ass
  \begin{equation}\label{eq:cdh square}
    \begin{tikzcd}
      B \ar{r}\ar{d}
        & Y \ar{d}{p}
      \\
      A \ar{r}{e}
        & X.
    \end{tikzcd}
  \end{equation}
\begin{defnlist}
  \item\label{item:cdh square/Nis square}
We say that $Q$ is a \emph{Nisnevich square} if $e$ is an open immersion, and $p$ is an étale morphism inducing an isomorphism $(Y\setminus B)_\red \simeq (X\setminus A)_\red$.
  \item\label{item:cdh square/proper cdh square}
We say that $Q$ is a \emph{proper cdh square}, or \emph{abstract blow-up square}, if $e$ is a closed immersion of finite presentation, and $p$ is a proper morphism inducing an isomorphism $(Y\setminus B)_\red \simeq (X\setminus A)_\red$.
  \item\label{item:cdh square/cdh square}
We say that $Q$ is a \emph{cdh square} if it is either a Nisnevich square or a proper cdh square.
\end{defnlist}
\end{defn}

\sssec{}

Given any class of commutative squares of \ass, we say that a presheaf satisfies \emph{descent} for this class if it sends all such squares to homotopy cartesian squares, and the empty scheme to a terminal object.
In case of the three classes considered in \defnref{defn:cdh square}, it follows from a theorem of Voevodsky \cite[Cor.~5.10]{VoevodskyCD} that descent in this sense is equivalent to \v{C}ech descent with respect to the associated Grothendieck topology.

\begin{exam}\label{exam:Nis descent for E localizing}
Every localizing invariant $E$ satisfies Nisnevich descent when regarded as a presheaf on quasi-compact quasi-separated \ass with $E(X) = E(\Perf(X))$.
This is essentially due to Thomason \cite{ThomasonTrobaugh} and in the asserted generality is a consequence of the study of compact generation properties of the \inftyCats $\Qcoh(X)$ carried out by Bondal--Van den Bergh \cite{BondalVDB}.
\end{exam}

\begin{exam}\label{exam:qsbl square cl}
Any quasi-smooth blow-up square \eqref{eq:qsbl square} induces a proper cdh square
  \begin{equation*}
    \begin{tikzcd}
      \P(\sN_{Z/X}|_{Z_\cl}) \ar{r}\ar{d}
        & (\Bl_{Z/X})_\cl \ar{d}
      \\
      Z_\cl \ar{r}
        & X_\cl
    \end{tikzcd}
  \end{equation*}
on underlying classical \ass.
\end{exam}

\begin{exam}\label{exam:closed square}
Consider the class of proper cdh squares \eqref{eq:cdh square} where the proper morphism $p$ is a closed immersion (with quasi-compact open complement).
The associated Grothendieck topology is the same as the one generated by \emph{closed squares}, i.e. cartesian squares as in \eqref{eq:cdh square} such that $e$ and $p$ are closed immersions, $e$ is of finite presentation and $p$ has quasi-compact open complement, and $A \sqcup Y \to X$ is surjective on underlying topological spaces.
\end{exam}

\begin{exam}\label{exam:nil-immersion}
Note that for any \as $X$, the square
  \begin{equation*}
    \begin{tikzcd}
      \initial \ar{r}\ar{d}
        & X_\red \ar{d}
      \\
      \initial \ar{r}
        & X
    \end{tikzcd}
  \end{equation*}
is a closed square as in \examref{exam:closed square}.
\end{exam}

\ssec{A cdh descent criterion}

\begin{thm}\label{thm:cdh criterion}
Let $\sF$ be a presheaf on the category $\bC$ of \ass, with values in a stable \inftyCat.
Then $\sF$ satisfies cdh descent if and only if it satisfies the following conditions:
\begin{thmlist}
  \item\label{item:cdh criterion/reduced}
It sends the empty scheme to a zero object.
  \item\label{item:cdh criterion/Nis}
It sends Nisnevich squares to cartesian squares.
  \item\label{item:cdh criterion/closed}
It sends closed squares to cartesian squares.
  \item\label{item:cdh criterion/qsbl}
For every $X\in\bC$ and every quasi-smooth closed immersion $Z \to X$, it sends the square (\examref{exam:qsbl square cl})
    \begin{equation*}
      \begin{tikzcd}
        \P(\sN_{Z/X}|_{Z_\cl}) \ar{r}\ar{d}
          & (\Bl_{Z/X})_\cl \ar{d}
        \\
        Z_\cl \ar{r}
          & X
      \end{tikzcd}
    \end{equation*}
to a cartesian square.
\end{thmlist}
Moreover, the same holds if $\bC$ is replaced by the full subcategory of \begin{inlinelist}
  \item quasi-compact quasi-separated (qcqs) \ass,
  \item schemes,
  \item or qcqs schemes
\end{inlinelist}.
\end{thm}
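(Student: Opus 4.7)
The forward direction is immediate: the empty scheme is a cdh cover of itself, Nisnevich and closed squares (\examref{exam:closed square}) are cdh squares by definition, and the classical underlying square of a quasi-smooth blow-up is a proper cdh square by \examref{exam:qsbl square cl}.

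For the converse, the plan is to invoke Voevodsky's theorem \cite[Cor.~5.10]{VoevodskyCD}, which asserts that cdh descent is equivalent to the combination of Nisnevich descent and descent for proper cdh (abstract blow-up) squares. Conditions (i) and (ii) yield Nisnevich descent, so the task is to deduce descent for \emph{all} abstract blow-up squares from (i)--(iv) together with the Nisnevich descent already in hand. The strategy is to show that the cd-structure consisting of Nisnevich squares, closed squares, and the underlying classical squares of quasi-smooth blow-ups (\examref{exam:qsbl square cl}) generates the cdh topology---equivalently, that every abstract blow-up cover admits, Nisnevich-locally on the base, a refinement built out of closed squares and quasi-smooth blow-ups.

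Concretely, given an abstract blow-up square with proper $p : Y \to X$ and finitely presented closed immersion $e : A \to X$, I would first use Nisnevich descent on $X$ to reduce to the affine case. Then the ideal cutting out $A$ is generated by finitely many functions $f_1, \dots, f_n \in \Gamma(X, \sO_X)$, and the derived zero locus $Z \to X$ is a quasi-smooth closed immersion of virtual codimension $n$ with $Z_\cl = A$. Condition (iv) applied to $\widetilde{X} := \Bl_{Z/X}$ ensures that $\sF$ sends the derived-blow-up proper cdh square $(A, \widetilde{X}_\cl, X)$ of \examref{exam:qsbl square cl} to a cartesian square. The core technical step is then to compare this with the original square $(A, Y, X)$; the natural approach is to form the classical fibre product $Y' := Y \times_X \widetilde{X}_\cl$, producing a new abstract blow-up $(E, Y', \widetilde{X}_\cl)$ with exceptional centre $E = \P(\sN_{Z/X}|_A)$---now a Cartier divisor in $\widetilde{X}_\cl$. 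Condition (iii), combined with the nil-immersion reduction of \examref{exam:nil-immersion}, should be used to strip away any derived/nilpotent structure and replace the scheme-theoretic image of $Y'$ in $\widetilde{X}_\cl$ by a closed decomposition; a further Nisnevich refinement would trivialize the projective bundle $E \to A$.

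The main obstacle I foresee is making this reduction terminate without any noetherian or finite-dimensionality hypothesis (since the theorem is stated for general qcqs algebraic spaces): once the centre has become a Cartier divisor, a direct further application of (iv) is vacuous, since the derived blow-up in a virtual Cartier divisor is an isomorphism. Thus the final step---descent for abstract blow-ups whose centre is a Cartier divisor in a reduced base---must be handled using only closed squares, nil-immersion reductions, and Nisnevich refinement. This is the technical heart of the argument, and is likely resolved by a sufficiently careful choice of the generators $f_1, \dots, f_n$ cutting out $A$ at the outset, perhaps after a Zariski refinement along the exceptional fibre, so that the strict transform $Y' \to \widetilde{X}_\cl$ becomes refinable by closed squares alone.
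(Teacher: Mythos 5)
Your forward direction, the reduction to the affine case via Nisnevich descent, and the construction of the derived thickening (the derived zero locus of finitely many generators of the ideal of $A$, giving a quasi-smooth $Z \to X$ with $Z_\cl = A$) are all correct and agree with the paper. But the converse direction has a genuine gap, which you flag yourself but do not resolve.

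You try to treat an \emph{arbitrary} proper cdh square $(A, Y, X)$ directly, base-changing $Y$ along $\widetilde{X}_\cl \to X$ to produce a residual proper cdh square over $\widetilde{X}_\cl$ centred in a Cartier divisor, and you correctly observe that condition~\itemref{item:cdh criterion/qsbl} is then vacuous and no further progress is possible. Your speculative remedy --- a careful choice of generators so that the strict transform ``becomes refinable by closed squares alone'' --- cannot work: the topology generated by closed squares and the proper cdh topology are genuinely different, and no choice of equations turns an arbitrary proper modification into a closed cover.

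The paper's proof resolves this in two steps, both missing from your plan. First, it restricts attention to the case where $Y = \Bl_{Z/X}$ is the \emph{classical blow-up} itself. In that case the square $\Gamma(Q,\sF)$ factors vertically: the quasi-smooth blow-up square for the derived thickening $\widetilde{Z}$ (handled by~\itemref{item:cdh criterion/qsbl}) is stacked over a \emph{closed square} comparing the strict transform $\Bl_{Z/X}$ with the underlying classical space $(\Bl_{\widetilde{Z}/X})_\cl$ of the derived blow-up (handled by~\itemref{item:cdh criterion/closed}); pasting the two cartesian squares gives descent for classical blow-up squares. Second --- and this is the key ingredient you never invoke --- an arbitrary proper cdh square is reduced to a blow-up square by Raynaud--Gruson's \emph{platification par \'eclatements}, following Kerz--Strunk--Tamme's argument verbatim with closed descent substituted for their pro-cdh descent. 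Raynaud--Gruson applies to qcqs algebraic spaces with no noetherian hypotheses, which is precisely what makes the theorem hold at the stated level of generality.
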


\begin{rem}\label{rem:trivial extension to dass}
Any presheaf $\sF$ on \ass can be trivially extended to \dass, by setting $\Gamma(X, \sF) = \Gamma(X_\cl, \sF)$ for every \das $X$.
The condition~\itemref{item:cdh criterion/qsbl} in \thmref{thm:cdh criterion} is equivalent to requiring this extension to satisfy descent for quasi-smooth blow-up squares \eqref{eq:qsbl square}.
\end{rem}

\begin{exam}\label{exam:cdh=localizing+nil+closed}
Let $E$ be a localizing invariant of stable \inftyCats.
Then it satisfies Nisnevich descent on qcqs \ass (\examref{exam:Nis descent for E localizing}) and quasi-smooth blow-up descent (\thmref{thm:blow-up descent}).
Assume that $E$ also satisfies \emph{derived nilpotent invariance}, i.e., that the canonical map $E(X) \to E(X_\cl)$ is invertible for every \das $X$.
Then the condition \itemref{item:cdh criterion/qsbl} in \thmref{thm:cdh criterion} holds.
Therefore, $E$ satisfies cdh descent if and only if it satisfies closed descent.
Moreover, by Nisnevich descent it suffices to consider closed squares of affine schemes.
\end{exam}

\begin{exam}\label{exam:A1 closed}
In the presence of $\A^1$-homotopy invariance, the Morel--Voevodsky localization theorem \cite[Theorem~3.2.21]{MorelVoevodsky} provides the following sufficient condition for closed descent.
Let $\sF$ be an $\A^1$-invariant Nisnevich sheaf on the category of \ass.
Suppose that, for every \as $S$, its restriction $\sF_S$ to the site of \emph{smooth} \ass over $S$ is stable under arbitrary base change.
That is, for every morphism of \ass $f : T \to S$, the canonical map $f^*(\sF_S) \to \sF_T$ is invertible.
Then $\sF$ satisfies closed descent.
This follows immediately from the closed base change formula (cf. \cite[Prop.~3.3.2]{KhanLocalization}).
\end{exam}

\begin{rem}
Let $E$ be a localizing invariant and suppose that it is moreover \emph{truncating} in the sense of \cite{LandTamme}.
That is, if $R$ is a connective $\sE_1$-ring spectrum and $\Mod_R^\perf$ denotes the stable \inftyCat of left $R$-modules, then the canonical map $E(\Mod^\perf_R) \to E(\Mod^\perf_{\pi_0(R)})$ is invertible.
Then Land--Tamme have recently proven that $E$ has closed descent, at least if we restrict to \emph{noetherian} \ass (see Step~1 in the proof of \cite[Thm.~A.2]{LandTamme}).
\end{rem}

\begin{rem}\label{rem:variants of cdh criterion}
There are a few variants of \thmref{thm:cdh criterion} with the same proof.
For example:
\begin{defnlist}
  \item
On the category of (qcqs) schemes, descent with respect to the \emph{rh} topology (generated by Zariski squares and proper cdh squares) can be checked with the same criteria, except that Nisnevich squares are replaced by Zariski squares in condition \itemref{item:cdh criterion/Nis}.
  \item
If we do not assume either Nisnevich or Zariski descent, descent for the proper cdh topology is still equivalent to conditions \itemref{item:cdh criterion/reduced}, \itemref{item:cdh criterion/closed}, and \itemref{item:cdh criterion/qsbl}, as long as we restrict to a full subcategory of algebraic spaces or schemes which satisfy Thomason's \emph{resolution property}.
For example, this holds on the category of quasi-projective schemes.
  \item\label{item:variants of cdh criterion/stacks}
One can extend the criterion to qcqs Artin stacks as follows.
The definition of Nisnevich square extends without modification (cf. \cite[Subsect.~2.3]{HoyoisKrishna}).
In the definition of proper cdh square, we add the requirement that the proper morphism $p$ is representable (cf. \emph{op. cit.}).
Then the criterion of \thmref{thm:cdh criterion} holds for stacks \emph{which admit the resolution property Nisnevich-locally}, see (\sssecref{sssec:cdh stacks}).
This condition is relatively mild.
For example, many quotient stacks have the resolution property (\cite[Lem.~2.4]{ThomasonEquivariant}, \cite[Exam.~7.5]{HallRydhPerfect}).
By the Nisnevich-local structure theorem of Alper--Hall--Rydh \cite[Thm.~2.9]{HoyoisKrishna}, any stack with linearly reductive and almost multiplicative stabilizers satisfies the resolution property Nisnevich-locally.
\end{defnlist}
\end{rem}

\ssec{Proof of \thmref{thm:cdh criterion}}

Since Nisnevich squares, closed squares, and quasi-smooth blow-up squares are all cdh squares, the conditions are clearly necessary.
Conversely suppose that $\sF$ is a presheaf satisfying the conditions and let $Q$ be a proper cdh square of \ass of the form
  \begin{equation}\label{eq:proper cdh in proof}
    \begin{tikzcd}
      E \ar{r}\ar{d}
        & Y \ar{d}{p}
      \\
      Z \ar{r}{i}
        & X.
    \end{tikzcd}
  \end{equation}
It will suffice to show that the induced square $\Gamma(Q, \sF)$ is homotopy cartesian.

\sssec{}\label{sssec:cdh criterion proof/blow-up}

Assume first that $Q$ is a blow-up square, i.e., that $Y = \Bl_{Z/X}$ is the blow-up of $X$ centred in $Z$ (and $E = \P(\sC_{Z/X})$ is the projectivized normal cone).
By Nisnevich descent we may assume that $X$ satisfies the resolution property (e.g. $X$ is affine).
Since $i : Z \to X$ is of finite presentation, the ideal of definition $\sI \subset \sO_X$ is of finite type.
Thus by the resolution property there exists a surjection $u : \sE \to \sI$ with $\sE$ a locally free $\sO_X$-module of finite rank.
Denote by $V = \bV_X(\sE) = \Spec_X(\Sym_{\sO_X}(\sE))$ the associated vector bundle and $0 : X \to V$ the zero section.
The $\sO_X$-module homomorphism $u : \sE \to \sI \subset \sO_X$ induces a section of $V$, whose derived zero-locus $\widetilde{Z}$ fits in the homotopy cartesian square
  \begin{equation*}
    \begin{tikzcd}
      \widetilde{Z} \ar{r}{\widetilde{i}}\ar{d}
        & X \ar{d}{u}
      \\
      X \ar{r}{0}
        & V.
    \end{tikzcd}
  \end{equation*}
By construction, $\widetilde{i} : \widetilde{Z} \to X$ is a quasi-smooth closed immersion and there is a canonical morphism $Z \to \widetilde{Z}$ which induces an isomorphism $Z \simeq \widetilde{Z}_\cl$.
Regarding $\sF$ as a presheaf on \dass as in \remref{rem:trivial extension to dass}, the square $\Gamma(Q,\sF)$ now factors as follows:
  \begin{equation*}
    \begin{tikzcd}
      \Gamma(X, \sF) \ar{r}\ar{d}
        & \Gamma(\widetilde{Z}, \sF) \ar{d}\ar[equals]{r}
        & \Gamma(Z, \sF) \ar{ldd}
      \\
      \Gamma(\Bl_{\widetilde{Z}/X}, \sF) \ar{r}\ar{d}
        & \Gamma(\P(\sN_{\widetilde{Z}/X}), \sF) \ar{d}
      \\
      \Gamma(\Bl_{Z/X}, \sF) \ar{r}
        & \Gamma(\P(\sC_{Z/X}), \sF)
    \end{tikzcd}
  \end{equation*}
The upper square is induced by a quasi-smooth blow-up square, hence is cartesian.
The lower square is induced by a closed square, hence is also cartesian.
Therefore it follows that the outer composite square is also cartesian.
This shows that $\sF$ satisfies descent for blow-up squares.

\sssec{}\label{sssec:cdh criterion proof/blow-up (X-Z)-admissible}

Slightly more generally, suppose that $Y = \Bl_{Z'/X}$ is a blow-up centred in some closed immersion $Z' \to X$ with $\abs{Z'} \subseteq \abs{Z}$ on underlying topological spaces, and let $E' \to Y$ denote the exceptional divisor.
Since $\sF$ is invariant under nilpotent extensions (\examref{exam:nil-immersion}) we may assume that $i' : Z' \to X$ actually factors through a closed immersion $Z' \to Z$ (see \examref{exam:nil-immersion}).
Applying descent for blow-up squares (\sssecref{sssec:cdh criterion proof/blow-up}), it will suffice to show that $\sF$ satisfies descent for the square
  \begin{equation*}
    \begin{tikzcd}
      E' \ar{r}\ar{d}
        & E \ar{d}
      \\
      Z' \ar{r}
        & Z.
    \end{tikzcd}
  \end{equation*}
Note that the blow-up $\Bl_{Z'/Z}$ is equipped with a canonical closed immersion into $E$ so that $E' \to E$ and $\Bl_{Z'/Z} \to Z$ form a closed covering.
Applying closed descent and descent for the blow-up square associated to $Z' \to Z$ (\sssecref{sssec:cdh criterion proof/blow-up}), we conclude.

\sssec{}

For the case of an arbitrary proper cdh square, we recall the standard argument using Raynaud--Gruson's technique of \emph{platification par éclatements} \cite[I, Cor.~5.7.12]{RaynaudGruson} to reduce to the case considered above (see e.g. \cite[Subsect.~5.2]{KerzStrunkTamme}).

\begin{constr}
  Let $Q$ be a proper cdh square of the form \eqref{eq:proper cdh in proof}.
  Assume that $X$ is quasi-compact and quasi-separated and that the open subspace $X\setminus Z$ is quasi-compact and dense in $X$.
  Then there exists a proper cdh square $Q'$ sitting above $Q$ such that the composite $Q' \circ Q$
    \begin{equation*}
      \begin{tikzcd}
        E' \ar{r}\ar{d}\ar[phantom]{rd}{\scriptstyle Q'}
        & Y' \ar{d}
      \\
      E \ar{r}\ar{d}\ar[phantom]{rd}{\scriptstyle Q}
        & Y \ar{d}{p}
      \\
      Z \ar[swap]{r}{i}
        & X
      \end{tikzcd}
    \end{equation*}
  is of the form considered in (\sssecref{sssec:cdh criterion proof/blow-up (X-Z)-admissible}).
  That is, $Y'$ is a blow-up of $X$ centred in some closed immersion $Z' \to X$ with $\abs{Z'} \subseteq \abs{Z}$.
  This follows from Raynaud--Gruson just as in the proof of \cite[Cor.~2.4]{HoyoisKrishna}.
\end{constr}

Let $Q$ be a proper cdh square of the form \eqref{eq:proper cdh in proof}.
Since $\sF$ is a Nisnevich sheaf, we may assume that $X$ is quasi-compact and quasi-separated.
Since $i : Z \to X$ is of finite presentation, its open complement $U = X\setminus Z$ is quasi-compact.
Using closed descent, we can ensure that $U$ is dense in $X$.
Now apply the construction above to get a proper cdh square $Q'$ such that $Q'\circ Q$ is of the form considered in (\sssecref{sssec:cdh criterion proof/blow-up (X-Z)-admissible}).
Applying the construction again, this time to $Q'$, we end up with a third square $Q''$ such that the composite $Q'' \circ Q'$ is also of the form considered in (\sssecref{sssec:cdh criterion proof/blow-up (X-Z)-admissible}).
Then we know that $\Gamma(Q'\circ Q, \sF)$ and $\Gamma(Q''\circ Q', \sF)$ are both homotopy cartesian.
It follows that the square $\Gamma(Q', \sF)$ is also homotopy cartesian (since $\sF$ takes values in a stable \inftyCat, it suffices to check that the induced map on homotopy fibres is invertible), and hence so is $\Gamma(Q, \sF)$.

\sssec{}
\label{sssec:cdh stacks}

We now discuss the extension to stacks mentioned in \remref{rem:variants of cdh criterion}\itemref{item:variants of cdh criterion/stacks}.
The precise statement is as follows.
Let $\bC$ be a category of qcqs Artin stacks such that
\begin{inlinelist}
  \item
every stack $X\in\bC$ admits a Nisnevich atlas by stacks with the resolution property;
  \item
for every stack $X\in\bC$ and every blow-up $Y \to X$, the qcqs Artin stack $Y$ also belongs to $\bC$.
\end{inlinelist}
Then the statement of \thmref{thm:cdh criterion} holds for presheaves on $\bC$.

The proof for the case of a blow-up square (\sssecref{sssec:cdh criterion proof/blow-up}) has been presented in such a way that it holds \emph{mutatis mutandis} under the above assumptions.
The argument of \cite[Claim~5.3]{KerzStrunkTamme} also goes through, using descent for closed squares and blow-up squares, to deal with the slightly more general case where $Y = \Bl_{Z'/X}$ is a blow-up centred in some closed immersion $Z' \to X$ that factors through $Z$.
To reduce a general proper cdh square to that case, we use Rydh's extension of Raynaud--Gruson \cite[Thm.~2.2]{HoyoisKrishna}.
First, closed descent allows us to assume that $X\setminus Z$ is dense in $X$.
Then we apply Rydh--Raynaud--Gruson just as in the proof of \cite[Cor.~2.4]{HoyoisKrishna}.
The only difference with the case of schemes or algebraic spaces is that in general we get a \emph{sequence} of $(X\setminus Z)$-admissible blow-ups $\tilde{X} \to X$ which factors through $p : Y \to X$.
The addition of a simple induction is then the only modification required to run the same argument.

\ssec{Homotopy invariant K-theory}
\label{ssec:kh/kh}

\sssec{}

For any qcqs \as $X$, its homotopy invariant K-theory spectrum is given by the formula
  \begin{equation}\label{eq:Gamma(X, Lhtp(K))}
    \Gamma(X, \KH) = \colim_{[n] \in \bDelta^\op} \K(X \times \A^n).
  \end{equation}
In other words, $\Gamma(X, \KH)$ is the geometric realization of the simplicial diagram $\K(X \times \A^\bullet)$, where $\A^\bullet$ is regarded as a cosimplicial scheme in the usual way (see e.g. \cite[p.~45]{MorelVoevodsky}).
This extends the usual definition \cite{WeibelKH,ThomasonTrobaugh}, and is a way to formally impose the property of $\A^1$-homotopy invariance: for any qcqs \as $X$, the projection $p : X \times \A^1 \to X$ induces an isomorphism of spectra
  \begin{equation*}
    p^* : \Gamma(X, \KH) \to \Gamma(X \times \A^1, \KH).
  \end{equation*}

\sssec{}
\label{sssec:KH derived}

As the previous paragraph makes sense when $X$ is derived, we may regard $\KH$ as a presheaf on qcqs derived \ass.
Given a Nisnevich square of the form \eqref{eq:cdh square}, Nisnevich descent for K-theory (\examref{exam:Nis descent for E localizing}) yields homotopy cartesian squares of spectra
  \begin{equation*}
    \begin{tikzcd}
      \K(X \times \A^n) \ar{r}\ar{d}
        & \K(A \times \A^n) \ar{d}
      \\
      \K(Y \times \A^n) \ar{r}
        & \K(B \times \A^n)
    \end{tikzcd}
  \end{equation*}
for every $[n] \in \bDelta^\op$.
Passing to the colimit over $n$, we deduce that $\KH$ also satisfies Nisnevich descent.
We have:

\begin{thm}\label{thm:KH nil}
  For every qcqs \das $S$, the canonical morphism of spectra
    \begin{equation*}
      \Gamma(S, \KH) \to \Gamma(S_\cl, \KH)
    \end{equation*}
  is invertible.
\end{thm}

\begin{proof}
By Nisnevich descent, we may as well assume $S$ is an affine derived scheme.
Let $\KH_S$ denote the restriction of $\KH$ to the site of affine derived schemes that are smooth and of finite presentation over $S$.
This is still an $\A^1$-homotopy invariant Nisnevich sheaf, and it is equipped with a canonical morphism
  \begin{equation*}
    \K^\cn_S \to \K_S \to \KH_S,
  \end{equation*}
where $\K^\cn_S$ and $\K_S$ are the respective restrictions of connective and non-connective K-theory.
By Cisinski, this morphism exhibits $\KH_S$ as the \emph{Bott periodization} of the $\A^1$-localization of $\K^\cn_S$, i.e., the periodization with respect to the Bott element $b \in \K_1(\bG_{m,S})$ (the proof is the same as in the case where $S$ is classical \cite[Cor.~2.12]{CisinskiKH}).
It follows from this description that for any morphism of affine derived schemes $f : T \to S$, there is a canonical isomorphism $f^*(\KH_S) \simeq \KH_T$, where $f^*$ denotes the functor of inverse image of $\A^1$-invariant Nisnevich sheaves.
Indeed, we reduce to checking the same property for $\K^\cn_S$, which is clear as this is identified up to Zariski localization with the group completion of the presheaf $\coprod_n \mrm{BGL}_{n,S}$.

In particular, we get a canonical isomorphism $i^*(\KH_S) \simeq \KH_{S_\cl}$, where $i : S_\cl \to S$ is the inclusion of the underlying classical scheme.
Moreover, $i^*$ induces an equivalence between the \inftyCats of $\A^1$-invariant Nisnevich sheaves on $S$ and $S_\cl$, respectively, by \cite[Cor.~1.3.5]{KhanThesis}.
We deduce that the canonical morphism
  \begin{equation*}
    \Gamma(S, \KH)
      \simeq \Gamma(S, \KH_S)
      \to \Gamma(S_\cl, \KH_{S_\cl})
      \simeq \Gamma(S_\cl, \KH)
  \end{equation*}
is invertible.
\end{proof}

\sssec{Proof of \thmref{thm:KH cdh descent}}
\label{sssec:proof of KH cdh descent}

We use the criterion of \thmref{thm:cdh criterion}.
Condition~\itemref{item:cdh criterion/reduced} is obvious.
Nisnevich descent (condition~\itemref{item:cdh criterion/Nis}) was verified above (\sssecref{sssec:KH derived}).
For condition~\itemref{item:cdh criterion/qsbl}, it will suffice by \thmref{thm:KH nil} and \remref{rem:trivial extension to dass} to show that $\KH$ sends quasi-smooth blow-up squares of \dass to homotopy cartesian squares.
This follows from the same property for K-theory (\thmref{thm:blow-up descent}) using the formula \eqref{eq:Gamma(X, Lhtp(K))} (just as in the proof of Nisnevich descent).
For closed descent (condition~\itemref{item:cdh criterion/closed}), we may restrict our attention to closed squares of \emph{affine} schemes (by Nisnevich descent).
This is classical, see \cite[Exer.~9.11(f)]{ThomasonTrobaugh} or \cite[Cor.~4.10]{WeibelKH}.
Alternatively, it follows from the criterion of \examref{exam:A1 closed}.

\begin{rem}
By continuity for $\KH$ (e.g. \cite[Thm.~4.9(5)]{HoyoisKrishna}), once we have descent for proper cdh squares as in \defnref{defn:cdh square}\itemref{item:cdh square/proper cdh square}, we can immediately drop the finite presentation hypothesis on $e$.
\end{rem}

\bibliographystyle{alphamod}
{\small
\bibliography{references}
}

\end{document}